\newif\ifarxiv
\renewcommand{\O}{\ensuremath{\mathcal{O}}}
\newcommand\footnoteref[1]{\protected@xdef\@thefnmark{\ref{#1}}\@footnotemark}
\newtheorem{fact}[theorem]{Fact}
\tikzstyle{vertex}=[circle, draw, inner sep=0pt, minimum size=6pt]
\newcommand{\noinstance}{\texttt{no}-instance}
\title{Strong cliques in diamond-free graphs}
\author{Nina Chiarelli$^{1}$
\and
Berenice Mart\'inez-Barona$^{2}$
\and
Martin Milani\v c$^{1}$
\and \\
J\'er\^{o}me Monnot
$^{3}$ 
\and
Peter Mur\v si\v c$^{1}$ 
}
\institute{
$^1${\small University of Primorska, Koper, Slovenia}\\
{\tt \{nina.chiarelli, martin.milanic, peter.mursic\}@famnit.upr.si}\\
$^2${\small Universitat Polit\`ecnica de Catalunya, Barcelona, Catalonia, Spain}\\
{\tt berenice.martinez@upc.edu}\\
$^3${\small LAMSADE, University Paris-Dauphine,
Paris Cedex 16, France}\\
}
\begin{document}
\pagestyle{plain}
\maketitle
\begin{sloppypar}
\begin{abstract}
A strong clique in a graph is a clique intersecting all inclusion-maximal stable sets. Strong cliques play an important role in the study of perfect graphs. We study strong cliques in the class of diamond-free graphs, from both structural and algorithmic points of view.
We show that the following five {\sf NP}-hard or {\sf co-NP}-hard problems remain intractable when restricted to the class of diamond-free graphs: Is a given clique strong? Does the graph have a strong clique? Is every vertex contained in a strong clique? Given a partition of the vertex set into cliques, is every clique in the partition strong? Can the vertex set be partitioned into strong cliques?

On the positive side, we show that the following two problems whose computational complexity is open in general can be solved in linear time in the class of diamond-free graphs: Is every maximal clique strong? Is every edge contained in a strong clique? These results are derived from a characterization of diamond-free graphs in which every maximal clique is strong, which also implies an improved
Erd\H{o}s-Hajnal property for such graphs.
\end{abstract}
\end{sloppypar}

\begin{sloppypar}
\keywords{maximal clique, maximal stable set, diamond-free graph, strong clique, simplicial clique, CIS graph, NP-hard problem, linear-time algorithm, Erd\H{o}s-Hajnal property}\\
\end{sloppypar}

\section{Introduction}\label{sec:intro}

\paragraph{Background and motivation.}

\begin{sloppypar}
Given a graph $G$, a \emph{clique} in $G$ is a set of pairwise adjacent vertices, and a \emph{stable set} (or \emph{independent set}) is a set of pairwise non-adjacent vertices. A clique (resp., stable set) is \emph{maximal} if it is not contained in any larger clique (resp., stable set). A clique is \emph{strong} if intersects all maximal stable sets and a strong stable set is defined analogously. The concepts of strong cliques and strong stable sets in graphs play an important role in the study of perfect graphs (see, e.g.,~\cite{MR778744}) and were studied in a number of papers (see, e.g.,~\cite{HUJDUROVIC20192738,MR1301855,S0012365X0900547020100101, MR1344757,BOROS201478, S009589560400028020040101, MR3278773, S0166218X1000397520110101, MR1212874,BOROS201747, HMR2018,MR2080087, MR3623393,MR3474710,e6c81804d9c9450c946b6f3cece46881,alcon2019characterization}).
Several algorithmic problems related to strong cliques and stable sets in graphs are
{\sf NP}-hard
or {\sf co-NP}-hard, in particular:
\begin{itemize}
\item {\sc Strong Clique}: Given a clique $C$ in a
graph $G$, is $C$ strong?
\item {\sc Strong Clique Existence}: 
Given a graph $G$, does $G$ have
a strong clique?
\item {\sc Strong Clique Vertex Cover}: 
Given a graph $G$, is every vertex
contained in a strong clique?
\item {\sc Strong Clique Partition}: 
Given a graph $G$ and a partition of its vertex set into cliques, is every clique in the partition strong?
\item {\sc Strong Clique Partition Existence}: 
Given a graph $G$, can its vertex set be partitioned into strong cliques?
\end{itemize}
The first problem in the above list is {\sf co-NP}-complete (see~\cite{MR1344757}), the second one is {\sf NP}-hard (see~\cite{MR1301855}), and the remaining three are {\sf co-NP}-hard (see~\cite{HUJDUROVIC20192738} for the third and the fourth problem\footnote{In~\cite{HUJDUROVIC20192738}, the authors state that the two problems are {\sf NP}-hard, but their proof actually shows {\sf co-NP}-hardness.} and~\cite{MR1217991,MR1161178,HMR2018} for the fifth one).\end{sloppypar}

\begin{sloppypar}
Another interesting property related to strong cliques is the one defining CIS graphs. A graph is said to be \emph{CIS} if every maximal clique is strong, or equivalently, if every maximal stable set is strong, or equivalently, if every maximal clique intersects every maximal stable set. Although the name CIS (Cliques Intersect Stable sets) was suggested by Andrade et al.~in a recent book chapter~\cite{e6c81804d9c9450c946b6f3cece46881}, this concept has been studied under different names
since the 1990s~\cite{MR1344757,S0012365X0900547020100101,BOROS201478,MR3278773,
S009589560400028020040101,
S0166218X1000397520110101,
MR2234986} (see~\cite{MR1344757} for a historical overview).
Several other graph classes studied in the literature can be defined in terms of properties involving strong cliques (see, e.g., \cite{MR1212874,BOROS201747, HMR2018, HUJDUROVIC20192738, MR3623393}).
\end{sloppypar}

There are several intriguing open questions related to strong cliques and strong stable sets, 
for instance: (i) What is the complexity of determining whether every edge of a given graph is contained in a strong
clique? 
(See, e.g.,\cite{MR3474710}.)
(ii) What is the complexity of recognizing CIS graphs?
(See, e.g.,\cite{e6c81804d9c9450c946b6f3cece46881}.)
(iii) Is there some $\varepsilon>0$ such that every $n$-vertex CIS graph has either a clique or a stable set of size at least $n^\varepsilon$?
(See~\cite{alcon2019characterization}.)

The main purpose of this paper is to study strong cliques in the class of diamond-free graphs. The \emph{diamond} is the graph obtained by removing an edge from the complete graph on four vertices, and a graph $G$ is said to be \emph{diamond-free} if no induced subgraph of $G$ is isomorphic to the diamond. Our motivation for focusing on the class of diamond-free graphs comes
from several sources. First, no two maximal cliques in a diamond-free graph share an edge, which makes interesting the question to what extent this structural restriction is helpful for understanding strong cliques.
Second, structural and algorithmic questions related to strong cliques in particular graph classes were extensively studied in the literature (see, e.g.,\cite{MR3474710,alcon2019characterization,HUJDUROVIC20192738,HMR2018,MR3278773,BOROS201478,MR2080087}), so this work represents a natural continuation of this line of research. Finally, this work furthers the knowledge about diamond-free graphs. In 1984, Tucker proved the Strong Perfect Graph Conjecture, now Strong Perfect Graph Theorem, for diamond-free graphs \cite{TUCKER1987313}, and more recently there has been an increased interest regarding the coloring problem and the chromatic number of diamond-free graphs and their subclasses (see, e.g., \cite{BRANDSTADT2012471, DABROWSKI2017410, Karthick:2018:CNP:3238799.3238823, KLOKS2009733}). Diamond-free graphs also played an important role in a recent work of Chudnovsky et al.~\cite{CHUDNOVSKY202045} who proved
that there are exactly $24$ 4-critical $P_6$-free graphs.

\begin{sloppypar}
\paragraph{Our contributions.}

Our study of strong cliques in diamond-free graphs is done from several interrelated points of view.
First, we give an efficiently testable characterization of diamond-free CIS graphs. A vertex $v$ in a graph $G$ is \emph{simplicial} if its closed neighborhood is a clique in $G$. Any such clique will be referred to as a \emph{simplicial clique}.
We say that a graph $G$ is \emph{clique simplicial} if every maximal clique in $G$ is simplicial. The characterization is as follows.
\end{sloppypar}

\begin{restatable}[]{theorem}{thmMain}
\label{thm:main}
Let $G$ be a connected diamond-free graph. Then $G$ is CIS if and only if
$G$ is either clique simplicial, $G\cong K_{m,n}$ for some $m,n\ge 2$, or
$G\cong L(K_{n,n})$ for some $n\ge 3$.
\end{restatable}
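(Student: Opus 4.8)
The plan is to prove both directions through a single reformulation of strongness that is special to diamond-free graphs. Recall that in a diamond-free graph every edge lies in a unique maximal clique; equivalently, every vertex outside a maximal clique $C$ has at most one neighbor in $C$ (two such neighbors would, together with $C$, produce two distinct maximal cliques sharing an edge). With this in hand I would record the following reformulation. For a maximal clique $C=\{c_1,\dots,c_k\}$ write $T_i=N(c_i)\setminus C$; by the previous remark the sets $T_i$ are pairwise disjoint, and $C$ is simplicial exactly when some $T_i$ is empty. A maximal clique $C$ fails to be strong if and only if some maximal stable set avoids it, which (since a stable set disjoint from $C$ can be greedily completed while still avoiding $C$ as soon as it dominates $C$) is equivalent to the existence of an \emph{independent transversal} $s_1\in T_1,\dots,s_k\in T_k$ with the $s_i$ pairwise non-adjacent. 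Thus $G$ is CIS if and only if no maximal clique admits an independent transversal of its external-neighborhood sets.

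With this lemma the easy direction is short. If $G$ is clique simplicial then every maximal clique has some empty $T_i$, so no independent transversal can exist and $G$ is CIS. For $K_{m,n}$ the maximal cliques are the edges, and the two external sets of an edge are the two sides minus one vertex each, which are completely joined, so every transversal contains an edge. For $L(K_{n,n})$ the maximal cliques are the rows and columns of the $n\times n$ rook's graph, and an independent transversal of a row would be a placement of $n$ non-attacking rooks avoiding that row, impossible by pigeonhole. Hence all three families are CIS.

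For the converse let $G$ be connected, diamond-free, CIS and not clique simplicial, fix a non-simplicial maximal clique $C$ (so all $T_i\ne\emptyset$ and no independent transversal exists), and split on whether $G$ is triangle-free. In the triangle-free case I first treat a non-simplicial edge $C=\{c_1,c_2\}$: here $T_1,T_2$ are stable, and since no independent pair $(s_1,s_2)\in T_1\times T_2$ may exist, $T_1$ is completely joined to $T_2$; a short check shows that $\{c_2\}\cup T_1$ and $\{c_1\}\cup T_2$ are the sides of an induced complete bipartite graph $H$ with both sides of size at least $2$. I then propagate: any further vertex $w$ adjacent to $H$ is, by the diamond test, adjacent to at most one vertex on each side; using CIS on the edges of $H$ one shows $w$ cannot meet both sides (otherwise the maximal clique on an edge $\{x,y'\}$ is dominated by the stable set $\{w,x'\}$) and must be completely joined to the side it meets. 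Hence $w$ extends $H$, and by connectivity $G\cong K_{m,n}$.

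The main obstacle is the remaining case, where $G$ contains a triangle. The same computation — a vertex meeting both sides of a complete bipartite core violates CIS — shows that a triangle cannot coexist with such a core, so every non-simplicial maximal clique now has size $k\ge 3$; fixing such a $C$, the goal is to reconstruct the rook's graph. Here I would show, combining the no-independent-transversal condition with repeated diamond and CIS arguments, that each $T_i$ is a clique, that no vertex of $G$ is simplicial, and consequently that every vertex lies in exactly two maximal cliques, all of a common size $n$. This exhibits $G$ as the line graph of a bipartite graph $H$ (its maximal cliques split into two colour classes, with each vertex incident to one clique of each colour), and a final counting argument forces $H\cong K_{n,n}$, i.e.\ $G\cong L(K_{n,n})$ with $n\ge 3$. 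I expect the delicate points to be forcing each vertex into \emph{exactly} two maximal cliques (diamond-freeness alone permits a vertex in arbitrarily many cliques, so CIS must be invoked to exclude this) and assembling the local grid structure into a global rook's graph.
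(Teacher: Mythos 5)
Your transversal reformulation, the sufficiency direction, and the triangle-free case are all sound; the triangle-free case works precisely because there \emph{every} edge of $G$ is itself a maximal clique, so your dominating pairs $\{w,x'\}$ really do dominate maximal cliques. The first genuine gap appears exactly where you place the ``main obstacle'': once triangles are allowed, the ``same computation'' breaks down. An edge $\{x,y'\}$ of the core with $x\in T_1$, $y'\in T_2$ need no longer be a maximal clique of $G$ --- it can have common neighbours outside the core (each such common neighbour is just another $w$-type vertex) --- and then your stable set $\{w,x'\}$ dominates the edge but not the maximal clique containing it, so no contradiction with CIS follows. The step is repairable, but only by anchoring at the original maximal-clique edge: since the core is $N[c_1]\cup N[c_2]$, the edge $\{c_1,x\}$ with $x\in T_1$ \emph{is} a maximal clique of $G$ ($c_1$ has no neighbours outside the core, $c_2x\notin E(G)$, and $T_1$ is stable), and for any outside vertex $w$ adjacent to $x$ it is dominated by the stable set $\{c_2,w\}$, so by connectivity $G$ equals the core and is complete bipartite. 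This repaired argument (which also makes your triangle/triangle-free case split unnecessary for this step) is essentially the paper's Lemma~\ref{EdgeMaxClique}, proved there via unsettled induced $P_4$'s of the form $(c_2,c_1,x,w)$ together with Proposition~\ref{prop:CIS-properties}.

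The second and more serious gap is that the heart of the theorem --- showing that a non-simplicial maximal clique of size $k\ge 3$ forces $G\cong L(K_{k,k})$ --- is a plan, not a proof. The intermediate claims you list are exactly where the difficulty sits, and you give no argument for them: by diamond-freeness, $T_i$ is a disjoint union of cliques (one per maximal clique through $v_i$) with no edges between them, so ``$T_i$ is a clique'' is literally the deferred claim ``$v_i$ lies in exactly two maximal cliques.'' Moreover, the ``final counting argument'' is mischaracterized: connectivity, bipartiteness and $n$-regularity of the root graph do \emph{not} force $K_{n,n}$. The cube graph $Q_3$ is connected, bipartite and $3$-regular, and $L(Q_3)$ is diamond-free with every vertex in exactly two maximal cliques of size $3$, yet $Q_3\ncong K_{3,3}$; what excludes it is that $L(Q_3)$ is not CIS ($Q_3$ has a maximal matching missing two antipodal vertices), so CIS must be invoked again at that stage, e.g.\ via the fact that the only connected bipartite graphs in which every maximal matching is perfect are the $K_{n,n}$, or via a domination argument. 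The paper closes precisely these holes with tools your outline lacks: it picks $C$ to be a non-simplicial maximal clique of \emph{minimum size}, so that every strictly smaller maximal clique meeting $C$ is simplicial and contributes a vertex to a dominating stable set; it builds such stable sets incrementally with Lemma~\ref{lknn_stable_diagonal}; and it globalizes the local $L(K_{k,k})$ structure with Lemma~\ref{lem:grid-in-CIS-graph}, whose proof needs a K\"onig-type perfect-matching argument rather than counting.
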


Second, we derive several consequences of Theorem~\ref{thm:main}. A graph class $\mathcal{G}$ is said to satisfy the \emph{Erd\H{o}s-Hajnal property} if there exists some $\varepsilon>0$ such that every graph $G\in \mathcal{G}$ has either a clique or a stable set of size at least $|V(G)|^\varepsilon$. The well-known Erd\H{o}s-Hajnal Conjecture~\cite{MR1031262} asks whether for every graph $F$, the class of $F$-free graphs has the Erd\H{o}s-Hajnal property. The conjecture is still open, but it has been confirmed for graphs $F$ with at most $4$ vertices (see, e.g.,~\cite{MR3150572}).
In the case when $F$ is the diamond, a simple argument shows that the inequality holds with $\varepsilon = 1/3$ (see~\cite{MR1425208}), but it is not known whether this value is best possible.
Theorem~\ref{thm:main} implies the following improvement for the diamond-free CIS graphs.

\begin{restatable}[]{theorem}{thmEH}
\label{}
Let $G$ be a diamond-free CIS graph. Then $\alpha(G)\cdot \omega(G) \ge |V(G)|$.
Consequently, $G$ has either a clique or a stable set of size at least $|V(G)|^{1/2}$.
\end{restatable}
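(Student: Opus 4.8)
The plan is to reduce to the connected case and then invoke the structural characterization of Theorem~\ref{thm:main}. For the reduction, suppose $G$ has connected components $G_1,\dots,G_k$. Since a clique lies in one component and a maximal stable set of $G$ is a union of one maximal stable set per component, $G$ is CIS if and only if each $G_i$ is CIS; hence each $G_i$ is a connected diamond-free CIS graph. Assuming the inequality for each component and using $\alpha(G)=\sum_i\alpha(G_i)$ together with $\omega(G)=\max_i\omega(G_i)\ge\omega(G_i)$, I would obtain
\[
\alpha(G)\cdot\omega(G)=\sum_i\alpha(G_i)\,\omega(G)\ge\sum_i\alpha(G_i)\,\omega(G_i)\ge\sum_i|V(G_i)|=|V(G)|.
\]
The final clause is then immediate: if both $\alpha(G)<|V(G)|^{1/2}$ and $\omega(G)<|V(G)|^{1/2}$, then $\alpha(G)\cdot\omega(G)<|V(G)|$, a contradiction. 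So it suffices to treat connected $G$, where Theorem~\ref{thm:main} leaves three cases.

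Two of the three cases are direct computations. If $G\cong K_{m,n}$ with $m,n\ge 2$, then $\omega(G)=2$ and $\alpha(G)=\max\{m,n\}\ge (m+n)/2$, so $\alpha(G)\cdot\omega(G)\ge m+n=|V(G)|$. If $G\cong L(K_{n,n})$ with $n\ge 3$, I would identify vertices of $G$ with edges of $K_{n,n}$, so that cliques correspond to sets of pairwise incident edges and stable sets to matchings; as $K_{n,n}$ is triangle-free, $\omega(G)$ equals its maximum degree $n$, while $\alpha(G)$ equals the size of a maximum matching, again $n$. Since $|V(G)|=|E(K_{n,n})|=n^2$, here $\alpha(G)\cdot\omega(G)=n^2=|V(G)|$, with equality.

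The remaining, and main, case is when $G$ is clique simplicial. Here I would prove the stronger statement that $V(G)$ can be covered by at most $\alpha(G)$ cliques; since every clique has at most $\omega(G)$ vertices, this yields $|V(G)|\le\alpha(G)\cdot\omega(G)$. I would argue by induction on $|V(G)|$ (over all clique-simplicial diamond-free graphs, not just connected ones). Choosing any maximal clique, which is simplicial and thus of the form $N[v]$ for a simplicial vertex $v$, I set $G'=G-N[v]$. Extending a maximum stable set of $G'$ by $v$ (which has no neighbor in $G'$) shows $\alpha(G')\le\alpha(G)-1$; so it remains to check that $G'$ is again clique simplicial (it is clearly diamond-free), after which induction covers $G'$ by at most $\alpha(G)-1$ cliques and adding $N[v]$ completes the cover.

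The crux — and the step I expect to be the main obstacle — is verifying that $G'=G-N[v]$ is clique simplicial. Here I would exploit the defining feature of diamond-free graphs that every edge lies in a unique maximal clique, so that any clique containing an edge $xy$ is contained in the unique maximal clique $M(xy)$ through $xy$. Given a maximal clique $C'$ of $G'$ with $|C'|\ge 2$ (the case $|C'|\le 1$ being trivial), fixing an edge $xy\in C'$ and writing $M=M(xy)$ forces $C'=M\setminus N[v]$, and $M$ is simplicial in $G$, say $N[s]=M$ for a simplicial $s\in M$. If $s\notin N[v]$, then $N_{G'}[s]=M\setminus N[v]=C'$, so $C'$ is simplicial in $G'$. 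The delicate point is to rule out $s\in N[v]$: if $s=v$ then $M=N[v]$ immediately, while $s\sim v$ places both $v,s\in M\cap N[v]$, so the edge $sv$ lies in the two maximal cliques $M$ and $N[v]$; either way $M=N[v]$, whence $C'=\emptyset$, a contradiction. Once this lemma is in place, the induction goes through and the three cases combine to give the bound; I would finally remark that among them only $L(K_{n,n})$ attains $\alpha(G)\cdot\omega(G)=|V(G)|$, so the estimate is essentially tight.
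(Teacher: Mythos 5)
Your proof is correct and follows the same skeleton as the paper's: reduce to connected graphs via the component decomposition (with $\alpha$ additive and $\omega$ a maximum), then apply Theorem~\ref{thm:main} and treat the cases $K_{m,n}$ and $L(K_{n,n})$ by the same direct computations. Where you genuinely diverge is the clique simplicial case. The paper handles it with a short, non-inductive argument (Lemma~\ref{lem:clique simplicial-alpha-omega}): list all maximal cliques $C_1,\dots,C_k$ of $G$; since each is simplicial, picking one simplicial vertex from each yields a stable set of size $k$ (if two such vertices were adjacent, their closed neighborhoods, which equal their respective maximal cliques, would coincide), and since the $C_i$ cover $V(G)$ one gets $|V(G)|\le \sum_{i=1}^k |C_i|\le k\cdot\omega(G)\le \alpha(G)\cdot\omega(G)$. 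You instead run an induction on $|V(G)|$, deleting a simplicial closed neighborhood $N[v]$ and proving that $G-N[v]$ is again clique simplicial; your verification (every clique through an edge $xy$ of $G-N[v]$ lies in the unique maximal clique $M$ of $G$ through $xy$, and the simplicial vertex $s$ of $M$ cannot lie in $N[v]$ unless $M=N[v]$, which is impossible here) is sound, but it uses diamond-freeness in an essential way. The trade-off: the paper's lemma is both shorter and more general---it holds for arbitrary clique simplicial graphs, with no diamond-freeness hypothesis and no heredity claim---while your peeling argument produces the same cover of $V(G)$ by at most $\alpha(G)$ cliques recursively and makes explicit where diamond-freeness enters. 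One small correction to your closing remark: $L(K_{n,n})$ is not the only tight case; for instance $K_{n,n}$ with $n\ge 2$ gives $\alpha(G)\cdot\omega(G)=2n=|V(G)|$, and so does any complete graph (a clique simplicial graph) with $\alpha(G)=1$ and $\omega(G)=|V(G)|$.
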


\begin{sloppypar}
Next, we develop a linear-time algorithm to test if every edge of a given diamond-free graph is in a simplicial clique. This leads to the following algorithmic consequence of Theorem~\ref{thm:main}.
\end{sloppypar}

\begin{restatable}[]{theorem}{thmRecognition}
\label{thm:diamond-free-recognition}
There is a linear-time algorithm that determines whether a given diamond-free graph $G$ is CIS.
\end{restatable}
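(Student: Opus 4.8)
The plan is to reduce the problem to the connected case and then to test, in linear time, each of the three possibilities allowed by Theorem~\ref{thm:main}. First I would observe that being CIS is a component-wise property: a maximal clique of $G$ lies entirely within one connected component $H$, whereas a maximal stable set of $G$ meets each component in a maximal stable set of that component, these choices being independent across components. Consequently a maximal clique $C\subseteq V(H)$ meets every maximal stable set of $G$ if and only if it meets every maximal stable set of $H$, so $G$ is CIS if and only if each of its connected components is CIS. The components can be computed in linear time, each is diamond-free as an induced subgraph, and their sizes sum to $|V(G)|+|E(G)|$, so it suffices to give a linear-time CIS test for a connected diamond-free graph $H$.

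By Theorem~\ref{thm:main}, a connected diamond-free $H$ on at least two vertices (the case $H\cong K_1$ being immediate) is CIS if and only if it is \emph{clique simplicial}, or $H\cong K_{m,n}$ for some $m,n\ge 2$, or $H\cong L(K_{n,n})$ for some $n\ge 3$. I would test these three conditions separately and accept if any holds. For the first, I claim that in a connected diamond-free graph on at least two vertices, being clique simplicial is equivalent to the property that every edge lies in a simplicial clique, which is exactly what the subroutine developed above decides in linear time. Indeed, in a diamond-free graph every edge lies in a unique maximal clique, and every simplicial clique $N[v]$ is itself a maximal clique; hence an edge lies in a simplicial clique precisely when the unique maximal clique containing it is simplicial, and since every maximal clique here contains an edge, this holds for all edges if and only if every maximal clique is simplicial.

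For the second condition, testing whether $H\cong K_{m,n}$ with $m,n\ge 2$ is straightforward in linear time: compute a $2$-colouring witnessing bipartiteness (rejecting if $H$ is not bipartite), and then check that the two colour classes $A,B$, which are uniquely determined since $H$ is connected, satisfy $|A|,|B|\ge 2$ and $|E(H)|=|A|\cdot|B|$, the latter forcing completeness of the bipartite graph. For the third condition I would invoke a classical linear-time line-graph recognition algorithm to decide whether $H$ is a line graph and, if so, to reconstruct its root graph $R$ with $H\cong L(R)$, which is essentially unique by Whitney's theorem once $|V(H)|\ge 5$; it then remains to verify that $R\cong K_{n,n}$ for some $n\ge 3$, which is again the complete-bipartite test applied to $R$. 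Each of the three tests runs in linear time, and summing over the components yields the claimed overall bound.

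The main obstacle is the $L(K_{n,n})$ case: one must recognise these line graphs exactly and in linear time, avoiding the well-known pitfalls of recognising rook's graphs purely from their strongly regular parameters (for instance, the Shrikhande graph shares the parameters of $L(K_{4,4})$). Routing this through line-graph recognition plus a check on the reconstructed root sidesteps such coincidences, provided one handles the small Whitney exceptions; these, however, correspond to graphs that are already clique simplicial and hence accepted by the first test. A secondary point requiring care is the precise equivalence between clique-simpliciality and the property that every edge lies in a simplicial clique, which relies on diamond-freeness guaranteeing a unique maximal clique through each edge.
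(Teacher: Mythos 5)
Your proposal is correct and follows essentially the same route as the paper: reduce to connected components, then apply Theorem~\ref{thm:main} by testing clique-simpliciality via the linear-time edge-simplicial subroutine (using the equivalence of Fact~\ref{obs:cs-es}, which you correctly re-derive from uniqueness of maximal cliques through edges), testing completeness of bipartite graphs directly, and testing $L(K_{n,n})$ via linear-time line-graph recognition plus Whitney's uniqueness of the root graph. The minor implementation differences (edge-count versus degree check for complete bipartiteness, and your explicit handling of the small Whitney exceptions) do not change the substance of the argument.
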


Theorem~\ref{thm:diamond-free-recognition} implies a linear-time algorithm for testing if every edge of a given diamond-free graph is contained in a strong clique. Furthermore, as a consequence of Theorem~\ref{thm:diamond-free-recognition} and other results in the literature, we report on the following partial progress on the open question about the complexity of recognizing CIS graphs.

\begin{sloppypar}
\begin{restatable}[]{theorem}{corFfree}
\label{thm:F-free}
For every graph $F$ with at most $4$ vertices, it can be determined in polynomial time whether a given $F$-free graph is CIS.
\end{restatable}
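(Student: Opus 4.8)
The plan is to establish Theorem~\ref{thm:F-free} by a case analysis on the graph $F$ with $|V(F)| \le 4$, reducing in each case to a polynomial-time (indeed, often linear-time) recognition result. The key organizing principle is that for graphs $F$ on at most $4$ vertices, the class of $F$-free graphs is either small enough to handle directly, or is contained in (or equal to) a class where CIS recognition is already known to be tractable. I would first enumerate the relevant graphs $F$ on $\le 4$ vertices up to isomorphism, noting that we may restrict attention to connected $F$, since if $F$ is disconnected then $F$-free graphs have bounded structure (for instance, if $F = 2K_2$ or contains $\overline{K_2}$ as the relevant obstruction, the complement class is constrained), and in several cases $F$-freeness forces a very limited graph structure.

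The heart of the argument is the diamond case, which is exactly Theorem~\ref{thm:diamond-free-recognition}: for $F$ equal to the diamond, CIS recognition is linear-time. The remaining connected graphs on $4$ vertices are $K_4$, the paw, $C_4$, $P_4$, $K_{1,3}$ (the claw), and the ``diamond'' already handled; on $3$ vertices we have $K_3$ and $P_3$; and the trivial small cases on $\le 2$ vertices. For each such $F$ I would invoke the appropriate known result. When $F$ is a subgraph of the diamond in the induced sense—so that $F$-free implies diamond-free—the result follows immediately from Theorem~\ref{thm:diamond-free-recognition}; this covers $P_4$, $C_4$, $K_{1,3}$, $P_3$, and smaller graphs, since each of these is an induced subgraph of the diamond, hence $F$-free $\subseteq$ diamond-free. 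For the complementary cases such as $\overline{\text{diamond}} = K_2 + 2K_1$ (i.e.\ the ``co-diamond''), I would use the self-complementary nature of the CIS property: $G$ is CIS if and only if $\overline{G}$ is CIS, so recognition of CIS in $F$-free graphs is equivalent to recognition of CIS in $\overline{F}$-free graphs, which transports each result to its complement.

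The cases that are \emph{not} reducible to diamond-freeness are those where $F$ itself contains the diamond as an induced subgraph or is unrelated to it, namely $F \in \{K_4, \text{paw}, K_3\}$ and their complements. For these I would appeal to the established structure theory of the corresponding $F$-free classes together with known CIS recognition results: $K_3$-free (triangle-free) graphs are trivially CIS-recognizable because in a triangle-free graph every maximal clique is an edge or a vertex, and one checks the intersection condition directly; paw-free graphs decompose (by Olariu's theorem) into triangle-free graphs and complete multipartite graphs, on each of which CIS is easily decided; and $K_4$-free or more general bounded-clique cases are handled by the fact that CIS can be tested in polynomial time whenever the number of maximal cliques is polynomially bounded, which I would argue holds in each remaining class. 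The complements are then covered by complementation.

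The main obstacle I anticipate is \emph{not} any single hard case but rather the bookkeeping of ensuring completeness of the case analysis—verifying that every one of the roughly eleven connected graphs on $\le 4$ vertices (and their complements, exploiting CIS self-complementarity) is accounted for by exactly one tractable mechanism, with no $F$ slipping through into a class where CIS recognition is open. The subtle point requiring care is the paw and its relatives, where one must actually invoke a structural decomposition rather than a direct containment in diamond-free graphs; here I would need Olariu's characterization of paw-free graphs and a short argument that CIS is polynomial on complete multipartite graphs and triangle-free graphs separately, then combined. Once the case list is verified complete and each entry is matched with Theorem~\ref{thm:diamond-free-recognition}, a known polynomial result, or a direct structural argument, the theorem follows by assembling the finitely many cases.
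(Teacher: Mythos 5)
Your proposal has a fatal gap in its central mechanism. You claim that $P_4$, $C_4$, and $K_{1,3}$ are induced subgraphs of the diamond, so that $F$-free implies diamond-free for these $F$ and Theorem~\ref{thm:diamond-free-recognition} covers them. This is false: the diamond has four vertices, so its only induced subgraph on four vertices is the diamond itself; none of $P_4$, $C_4$, $K_{1,3}$ (each having four vertices but fewer than five edges) is an induced subgraph of it. For instance, the triangle-free graph $C_4$ is trivially not diamond-free in the containment sense you need -- indeed $C_4$-free and diamond-free are incomparable classes. Consequently your argument provides nothing at all for exactly the three cases that require genuinely separate treatment: the claw case needs a dedicated (nontrivial, published) polynomial-time algorithm for recognizing claw-free CIS graphs; the $C_4$ case needs the fact that in $C_4$-free graphs strong cliques coincide with simplicial cliques, together with a polynomial bound on the number of maximal cliques in $C_4$-free graphs; and the $P_4$ case is disposed of by the fact that every $P_4$-free graph is CIS. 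None of these follows from the diamond-free result.

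A second, independent error is your general principle that CIS can be tested in polynomial time whenever the number of maximal cliques is polynomially bounded. Even after enumerating all maximal cliques, one must test each for being strong, and the \textsc{Strong Clique} problem is {\sf co-NP}-complete in general (as this very paper emphasizes). What actually makes the $K_4$-free case work is the bounded clique \emph{size}: by Lemma~\ref{lem:strong-cliques}, a clique $C$ fails to be strong iff some stable set of size at most $|C|\le 3$ dominates it, and such small sets can be checked by brute force. Your paw case via Olariu's decomposition (triangle-free or complete multipartite components) is a workable alternative to the paper's argument (the paper instead shows paw-free CIS graphs are exactly the $P_4$-free paw-free graphs, via the settled-$P_4$ property), but as written you leave the triangle-free CIS sub-case as an assertion; it does hold, since maximal cliques there are edges and strongness of an edge reduces to a simple neighborhood check, but that argument needs to be supplied.
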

\end{sloppypar}


\begin{sloppypar}
Finally, we complement the above efficient characterizations with hardness results about several problems related to strong cliques when restricted to the class of diamond-free graphs. More specifically, using reductions from the {\sc $3$-Colorability} problem in the class of triangle-free graphs we show the following.
\end{sloppypar}

\begin{sloppypar}
\begin{restatable}[]{theorem}{thmHard}
\label{}
When restricted to the class of diamond-free graphs,
the {\sc Strong Clique}, {\sc Strong Clique Existence}, {\sc Strong Clique Vertex Cover}, and {\sc Strong Clique Partition} problems are {\sf co-NP}-complete, and the {\sc Strong Clique Partition Existence} problem is {\sf co-NP}-hard.
\end{restatable}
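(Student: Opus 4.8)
The plan is to reduce from \textsc{3-Colorability} restricted to triangle-free graphs, which is \textsf{NP}-hard, arranging that a \yesinstance\ of \textsc{3-Colorability} maps to a \noinstance\ of each strong-clique problem and vice versa; this gives \textsf{co-NP}-hardness. Membership in \textsf{co-NP} for the first four problems will follow from the fact that a diamond-free graph has only $O(|V(G)|+|E(G)|)$ maximal cliques, since no two of them share an edge. Indeed, ``the given clique is not strong'', ``no maximal clique is strong'', ``some vertex lies in no strong clique'', and ``some prescribed clique is not strong'' then all admit short certificates, each being one or polynomially many maximal stable sets avoiding the relevant cliques (here one uses that a clique containing a strong clique is itself strong, so a graph has a strong clique iff it has a strong \emph{maximal} clique, and that a vertex lies in a strong clique iff some maximal clique through it is strong).

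The core gadget is as follows. Given a triangle-free graph $H$, I would build $G$ with vertices $x_v^i$ for $v\in V(H),\ i\in\{1,2,3\}$ and $g_v$ for $v\in V(H)$: make $Q=\{g_v : v\in V(H)\}$ a clique; join $g_v$ to $x_v^1,x_v^2,x_v^3$; add a \emph{conflict} edge $x_u^ix_v^i$ for every $uv\in E(H)$ and every $i$; and leave each triple $\{x_v^1,x_v^2,x_v^3\}$ stable. The central claim is that $Q$ is strong iff $H$ is not $3$-colorable. For the correspondence I would argue that if $S$ is a maximal stable set with $S\cap Q=\emptyset$, then each $g_v$ is dominated only through its triple, so some $x_v^i\in S$, while the conflict edges force the colour-sets $S_v=\{i:x_v^i\in S\}$ to be disjoint along edges of $H$; choosing one colour from each nonempty $S_v$ yields a proper $3$-colouring. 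Conversely, a proper $3$-colouring $c$ gives the stable set $\{x_v^{c(v)}\}$, which avoids $Q$ and extends to a maximal stable set still avoiding $Q$, since each $g_v$ stays dominated by $x_v^{c(v)}$. Hence a maximal stable set avoiding $Q$ exists iff $H$ is $3$-colorable, i.e.\ $Q$ is strong iff $H$ is not $3$-colorable; moreover $Q$ is a maximal clique.

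The step I expect to be the main obstacle is verifying that $G$ is diamond-free, and this is exactly where triangle-freeness of $H$ enters. I would show that every triangle of $G$ lies inside $Q$: the subgraph induced by the $x$-vertices is the disjoint union over $i$ of a copy of $H$, the triples are stable, and each star-edge $g_vx_v^i$ lies in no triangle; in particular a conflict edge $x_u^ix_v^i$ has \emph{no} common neighbour precisely because $u$ and $v$ have no common neighbour in the triangle-free graph $H$. Since an induced diamond consists of two triangles sharing an edge, and since any vertex adjacent to two vertices of $Q$ must itself lie in $Q$ (each $x_v^i$ touches only $g_v$), every induced diamond would have all four vertices inside the clique $Q$, which is impossible. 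Thus $G$ is diamond-free and is built in linear time.

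Finally I would derive all five statements from this gadget with small modifications, using that the closed neighbourhood $N[z]$ of a simplicial vertex $z$ is always a strong clique. For \textsc{Strong Clique} I output $(G,Q)$. For \textsc{Strong Clique Existence} I may assume (by passing to the $2$-core) that $H$ has minimum degree at least $2$; then the maximal cliques of $G$ are $Q$, the conflict edges, and the stars $g_vx_v^i$, and the latter two are \emph{always} non-strong (one exhibits an avoiding maximal stable set directly), so $G$ has a strong clique iff $Q$ is strong iff $H$ is not $3$-colorable. For \textsc{Strong Clique Vertex Cover} and \textsc{Strong Clique Partition} I attach to each $x_v^i$ a pendant vertex $\ell_v^i$; this preserves diamond-freeness (pendants create no triangle) and the correspondence for $Q$, while making each $\{x_v^i,\ell_v^i\}=N[\ell_v^i]$ a strong clique. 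Then every vertex outside $Q$ always lies in a strong clique, whereas when $H$ is $3$-colorable every clique meeting $Q$ (a subclique of $Q$ or a star) is avoided by some maximal stable set and hence non-strong; so both ``every vertex lies in a strong clique'' and ``every part of the partition $\{Q\}\cup\{\{x_v^i,\ell_v^i\}\}$ is strong'' reduce to ``$Q$ is strong''. The same pendant graph settles \textsc{Strong Clique Partition Existence}: if $H$ is $3$-colorable no strong clique contains any $g_v$, so no partition into strong cliques exists, while if $H$ is not $3$-colorable the partition $\{Q\}\cup\{\{x_v^i,\ell_v^i\}\}$ works; hence such a partition exists iff $H$ is not $3$-colorable, giving \textsf{co-NP}-hardness. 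The remaining work is the routine but careful case analysis that the small cliques are non-strong and that each modification keeps the graph diamond-free.
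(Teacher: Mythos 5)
Your proof is correct and follows essentially the same route as the paper: a reduction from {\sc $3$-Colorability} on triangle-free graphs, with a central clique $Q$ (the paper's $C$) whose strongness encodes non-$3$-colorability, conflict edges mirroring $E(H)$, pendant vertices added to handle the vertex-cover and partition variants, and {\sf co-NP} membership via the polynomial bound on the number of maximal cliques in diamond-free graphs. The only notable difference is the per-vertex gadget (your apex $g_v$ over a stable triple, which needs minimum degree $2$ obtained via the $2$-core, versus the paper's $4$-cliques $C_v$, which need minimum degree $3$); this is a detail-level variation, not a different approach.
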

\end{sloppypar}




\ifarxiv\else
Due to space restrictions, most proofs are omitted and will appear in the full version of this work.
\fi

\section{Preliminaries}\label{sec:prelim}

We consider only graphs that are finite and undirected. We refer to simple graphs as graphs and to graphs with multiple edges allowed as multigraphs. Let $G=(V,E)$ be a graph with vertex set $V(G)=V$ and edge set $E(G)=E$. For a subset of vertices $X\subseteq V(G)$, we will denote by $G[X]$ the subgraph of $G$ induced by $X$, that is, the graph with vertex set $X$ and edge set $\{\{u,v\}\mid\{u,v\}\in E(G);\; u,v\in X\}$. We denote the complete graph, the path, and the cycle graph of order $n$ by $K_n$, $P_n$, and $C_n$, respectively. The graph $K_3$ will be also referred as a \emph{triangle}. By $K_{m,n}$  we denote the complete bipartite graph with parts of size $m$ and $n$. The fact that a graph $G$ is isomorphic to a graph $H$  will be denoted by $G\cong H$. We say that $G$ is $H$-\emph{free} if no induced subgraph of $G$ is isomorphic to $H$.

The \emph{neighborhood} of a vertex $v$ in a graph $G$, denoted by $N_G(v)$ (or just $N(v)$ if the graph is clear from the context), is the set of vertices adjacent to $v$ in $G$. The cardinality of $N_G(v)$ is the \emph{degree} of $v$ in $G$, denoted by $d_G(v)$ (or simply $d(v)$). The \emph{closed neighborhood}, $N(v)\cup\{v\}$, is denoted by $N[v]$. Given a graph $G$ and a set $X\subseteq V(G)$, we denote by $N_G(X)$ the set of vertices in $V(G)\setminus X$ having a neighbor in $X$. The \emph{line graph} of a graph $G$, denoted with $L(G)$, is the graph with vertex set $E(G)$ and such that two vertices in $L(G)$ are adjacent if and only if their corresponding edges in $G$ have a vertex in common. A \emph{matching} in a graph $G$ is a set of pairwise disjoint edges. A matching is \emph{perfect} if every vertex of the graph is an endpoint of an edge in the matching. Given a graph $G$, we denote by $\alpha(G)$ the maximum size of a stable set in $G$ and by $\omega(G)$ the maximum size of a clique in $G$.

We first recall
a 
basic property of CIS graphs (see, e.g.,~\cite{e6c81804d9c9450c946b6f3cece46881}). An induced $P_4$, $(a,b,c,d)$, in a graph $G$ is said to be \emph{settled} (in $G$) if $G$ contains a vertex $v$ adjacent to both $b$ and $c$ and non-adjacent to both $a$ and $d$.

\begin{proposition}\label{prop:CIS-properties}\mbox{}
In every CIS graph each induced $P_4$ is settled.\label{CIS:settled}
\end{proposition}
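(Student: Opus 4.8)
The plan is to extract the witness vertex $v$ directly from the defining property of CIS graphs, namely that every maximal clique meets every maximal stable set. The idea is to build one maximal clique and one maximal stable set out of the given $P_4$ in such a way that their guaranteed common vertex is forced to have exactly the adjacencies required by the definition of ``settled''.

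Concretely, let $(a,b,c,d)$ be an induced $P_4$ in $G$, so that the edges are $ab$, $bc$, $cd$ and the non-edges are $ac$, $bd$, $ad$. First I would observe that $\{b,c\}$ is a clique (it is the middle edge) and that $\{a,d\}$ is a stable set (its only possible edge, $ad$, is absent). I would then extend $\{b,c\}$ to a maximal clique $C$ and extend $\{a,d\}$ to a maximal stable set $S$. Because $G$ is CIS, $C$ and $S$ must intersect; I fix a vertex $v \in C \cap S$, and the remainder of the argument is to check that this $v$ is the desired witness.

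The verification proceeds in two symmetric halves. Since $a \in S$ is adjacent to $b$ and $d \in S$ is adjacent to $c$, neither $b$ nor $c$ can belong to the stable set $S$; as $v \in S$, this forces $v \notin \{b,c\}$, and then $v \in C$ together with $b,c \in C$ yields $v \sim b$ and $v \sim c$. For the non-adjacencies, I note that $v \sim c$ while $a \not\sim c$ rules out $v = a$, and $v \sim b$ while $d \not\sim b$ rules out $v = d$; hence $v$, $a$, $d$ are three distinct vertices of the stable set $S$, so $v \not\sim a$ and $v \not\sim d$. Thus $v$ is adjacent to both $b$ and $c$ and non-adjacent to both $a$ and $d$, which is precisely the statement that the $P_4$ is settled.

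I do not expect a genuine obstacle here: once the two extensions are formed, the CIS property does all the work, and the only care needed is the bookkeeping that $v$ is distinct from each of $a,b,c,d$ before reading off its adjacencies. The one point worth stating cleanly is the pair of distinctness arguments that locate $v$ strictly outside the path, since these are what convert ``$v$ lies in $C$ and in $S$'' into the four concrete adjacency relations.
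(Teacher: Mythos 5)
Your proof is correct. The paper does not prove this proposition at all---it is stated as a known basic property of CIS graphs with a citation to the literature---and your argument (extend the middle edge $\{b,c\}$ to a maximal clique, the endpoints $\{a,d\}$ to a maximal stable set, and let the CIS intersection property produce the settling vertex, with the distinctness bookkeeping you carry out) is exactly the standard proof of this fact, so nothing is missing.
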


\ifarxiv
We will also use the following characterization of diamond-free graphs (see, e.g.,~\cite{FELLOWS20112}).

\begin{lemma}\label{lem:maximal-cliques-diamond-free}
A graph $G$ is diamond-free if and only if every edge is in a unique maximal clique.
\end{lemma}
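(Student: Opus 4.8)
The plan is to prove both implications through their contrapositives, exploiting that a diamond is precisely two triangles sharing an edge, so that the shared edge must lie in (at least) two distinct maximal cliques. First I would record the trivial observation that every edge $\{u,v\}$ lies in \emph{at least} one maximal clique, since $\{u,v\}$ is itself a clique and any clique extends to a maximal one; hence the real content of the lemma is the equivalence between diamond-freeness and every edge lying in \emph{at most} one maximal clique.

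For the forward direction I would assume $G$ is diamond-free and suppose, for contradiction, that some edge $\{u,v\}$ is contained in two distinct maximal cliques $C_1 \ne C_2$. Since distinct maximal cliques are incomparable (an inclusion between maximal cliques forces equality), I can pick a vertex $x \in C_1 \setminus C_2$. As $x,u,v \in C_1$, we have $x \sim u$ and $x \sim v$. The key step is the choice of a fourth vertex: because $C_2$ is maximal and $x \notin C_2$, the vertex $x$ must be non-adjacent to some $y \in C_2$, and since $x$ is adjacent to both $u$ and $v$, this $y$ is necessarily distinct from $u$ and $v$. Then $\{u,v,x,y\}$ induces a diamond, with edges $uv, ux, vx, uy, vy$ present (the first three as above, the last two because $y,u,v \in C_2$) and $x \not\sim y$, contradicting diamond-freeness.

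For the reverse direction I would argue the contrapositive. If $G$ contains a diamond on vertices $a,b,c,d$ with $\{a,d\}$ the unique non-edge, then the edge $\{b,c\}$ lies in the two triangles $\{a,b,c\}$ and $\{b,c,d\}$. Extending each triangle to a maximal clique produces maximal cliques $M_a \ni a$ and $M_d \ni d$, both containing $\{b,c\}$. These are distinct, since a single clique containing both $a$ and $d$ would force $a \sim d$, contradicting that $\{a,d\}$ is a non-edge. Hence $\{b,c\}$ lies in two distinct maximal cliques, so uniqueness fails.

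I do not expect a serious obstacle here; the only point requiring care is the selection of $y$ in the forward direction. One must invoke the maximality of $C_2$ to produce a non-neighbor of $x$ inside $C_2$, and then use that $x \sim u$ and $x \sim v$ to guarantee this non-neighbor is genuinely new, so that $\{u,v,x,y\}$ consists of four distinct vertices and really induces a diamond rather than a degenerate or smaller configuration.
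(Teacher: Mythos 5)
Your proof is correct. Note that the paper does not actually prove this lemma; it states it as a known characterization with a citation (to Fellows et al.), so there is no in-paper argument to compare against. Your two-directional argument --- producing an induced diamond from an edge lying in two incomparable maximal cliques (using maximality of $C_2$ to find a non-neighbor $y$ of $x$ inside $C_2$, necessarily distinct from $u$ and $v$), and conversely extending the two triangles of a diamond to distinct maximal cliques sharing the edge $\{b,c\}$ --- is the standard proof of this folklore fact, with all the delicate points (incomparability of maximal cliques, distinctness of the four vertices, distinctness of $M_a$ and $M_d$ via the non-edge $\{a,d\}$) handled correctly.
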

\fi

The following statements are consequences of Lemma~\ref{lem:maximal-cliques-diamond-free}.

\begin{corollary}\label{cor:maximal-cliques-diamond-free}
A diamond-free graph $G= (V,E)$ has at most $|E|$ maximal cliques.
\end{corollary}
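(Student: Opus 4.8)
The plan is to set up an injection from the set of maximal cliques of $G$ into the edge set $E$, using the uniqueness guaranteed by Lemma~\ref{lem:maximal-cliques-diamond-free}. First I would observe that we may assume $G$ has no isolated vertices, so that every maximal clique $C$ contains at least one edge. Indeed, a single-vertex set $\{v\}$ is a maximal clique only when $v$ is isolated; removing such vertices affects neither $|E|$ nor the count of the remaining maximal cliques, so the bound for the general case reduces to this one. Consequently every maximal clique $C$ satisfies $|C| \ge 2$ and contains some edge of $G$.

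Next I would define a map $\phi$ from the set of maximal cliques of $G$ to $E$ by choosing, for each maximal clique $C$, an arbitrary edge $\phi(C)$ with both endpoints in $C$; this is well defined by the previous paragraph. The key step is to verify that $\phi$ is injective. Suppose $\phi(C_1) = \phi(C_2) = e$ for two maximal cliques $C_1$ and $C_2$. Then $e$ is an edge contained in both $C_1$ and $C_2$. Since $G$ is diamond-free, Lemma~\ref{lem:maximal-cliques-diamond-free} guarantees that $e$ lies in a \emph{unique} maximal clique of $G$, forcing $C_1 = C_2$. Hence $\phi$ is injective, and the number of maximal cliques is at most $|E|$.

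The argument is otherwise routine; the only point requiring care is the treatment of single-vertex maximal cliques, which carry no edge and therefore cannot be mapped into $E$. Once isolated vertices are excluded (or assumed absent), the bound follows immediately from the uniqueness statement of Lemma~\ref{lem:maximal-cliques-diamond-free}.
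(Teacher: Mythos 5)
Your core argument --- mapping each maximal clique to one of its edges and invoking Lemma~\ref{lem:maximal-cliques-diamond-free} to conclude that this map is injective --- is exactly the intended proof: the paper states Corollary~\ref{cor:maximal-cliques-diamond-free} without an explicit proof, as an immediate consequence of that lemma, and the injection is the whole content.

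However, your treatment of isolated vertices contains a genuine logical slip. You claim that deleting isolated vertices ``affects neither $|E|$ nor the count of the remaining maximal cliques, so the bound for the general case reduces to this one.'' It does not: each isolated vertex $v$ is itself a maximal clique $\{v\}$, so deleting isolated vertices strictly decreases the total number of maximal cliques while leaving $|E|$ unchanged, and the unaccounted singletons are precisely what breaks the reduction. Indeed, the statement as literally written is false in the presence of isolated vertices: a single isolated vertex yields one maximal clique and zero edges. What your injection actually proves is the corollary for graphs without isolated vertices, or, in general, the bound of $|E|$ plus the number of isolated vertices (hence at most $|V|+|E|$ maximal cliques). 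This is a defect of the statement rather than of your main argument, and it is harmless for the paper's uses of the corollary (it is only invoked to get polynomial-size certificates in the {\sf co-NP} membership proofs), but the reduction sentence as you wrote it is not a valid argument: you should either state the no-isolated-vertices hypothesis explicitly or record the weaker general bound.
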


\begin{corollary}
\label{vNotInMaxClique}
Let $G$ be a diamond-free graph, let $C$ be a maximal clique in $G$ and let $u\in V(G)\setminus C$.
Then $|N(u)\cap C|\leq 1$.
\end{corollary}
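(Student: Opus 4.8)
The plan is to argue by contradiction, invoking the characterization in Lemma~\ref{lem:maximal-cliques-diamond-free} that in a diamond-free graph every edge lies in a \emph{unique} maximal clique. Suppose, for the sake of contradiction, that $u$ has at least two neighbors in $C$, say $a,b\in N(u)\cap C$. Since $a$ and $b$ both belong to the clique $C$, they are adjacent, so $\{u,a,b\}$ is a triangle in $G$.

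Next I would extend this triangle to a maximal clique $C'$ of $G$ that contains $u$; such a $C'$ exists because every clique is contained in some maximal clique. Since $u\in C'$ but $u\notin C$ (as $u\in V(G)\setminus C$), we have $C'\neq C$. On the other hand, the edge $\{a,b\}$ is contained in $C$ and also in $C'$, so $\{a,b\}$ lies in two distinct maximal cliques. This contradicts Lemma~\ref{lem:maximal-cliques-diamond-free}, and the contradiction establishes $|N(u)\cap C|\le 1$.

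The argument is essentially immediate once Lemma~\ref{lem:maximal-cliques-diamond-free} is available, so I do not anticipate any real obstacle; the only point requiring a word of care is the appeal to maximality to produce the clique $C'$ and to conclude $C'\neq C$ from $u\notin C$. For completeness I would mention an alternative route that exhibits a diamond directly: the maximality of $C$ together with $u\notin C$ forces a vertex $c\in C$ non-adjacent to $u$ (otherwise $C\cup\{u\}$ would be a larger clique), and then $\{u,a,b,c\}$ induces a diamond, since $a,b,c$ are pairwise adjacent while $u$ is adjacent to $a$ and $b$ but not to $c$, contradicting the diamond-freeness of $G$.
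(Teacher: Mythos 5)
Your proposal is correct and follows exactly the route the paper intends: the corollary is stated as a consequence of Lemma~\ref{lem:maximal-cliques-diamond-free} (every edge of a diamond-free graph lies in a unique maximal clique), and your main argument—two neighbors $a,b$ of $u$ in $C$ would put the edge $\{a,b\}$ in two distinct maximal cliques $C$ and $C'\ni u$—is precisely that derivation, with the details the paper leaves implicit spelled out. Your alternative argument, exhibiting the diamond $\{u,a,b,c\}$ directly from a non-neighbor $c\in C$ of $u$ guaranteed by maximality, is also valid and even bypasses the lemma entirely.
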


%
%

Given a clique $C$ in $G$ and a set $S\subseteq V(G)\setminus C$, we say that clique $C$ is \emph{dominated by $S$} if every vertex of $C$ has a neighbor in $S$. Using this notion, simplicial cliques can be characterized as follows.

\begin{lemma}\label{lem:simplicial-cliques}
A clique $C$ in a graph $G$ is not simplicial if and only $C$ is dominated by $V(G)\setminus C$.
\end{lemma}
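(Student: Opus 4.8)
The plan is to prove the equivalence by directly unfolding the two definitions involved and observing that each side is the logical negation of the other. Recall that, by the definition given just before Theorem~\ref{thm:main}, a \emph{simplicial clique} is the closed neighborhood $N[v]$ of a simplicial vertex $v$ (that is, a vertex whose closed neighborhood is a clique). So saying that the clique $C$ ``is simplicial'' means that $C = N[v]$ for some simplicial vertex $v$. On the other hand, saying that $C$ is \emph{dominated by} $V(G)\setminus C$ means, by definition, that every vertex of $C$ has a neighbor in $V(G)\setminus C$. The heart of the proof is to show that $C$ is a simplicial clique precisely when this fails.

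First I would establish the key reformulation: for a clique $C$, we have that $C$ is a simplicial clique \emph{if and only if} there exists a vertex $v\in C$ with $N(v)\subseteq C$. For the forward direction, if $C = N[v]$ for some simplicial $v$, then $v\in N[v]=C$ and $N(v)\subseteq N[v]=C$. For the converse, suppose $v\in C$ satisfies $N(v)\subseteq C$; then $N[v]=N(v)\cup\{v\}\subseteq C$, while the fact that $C$ is a clique containing $v$ gives $C\setminus\{v\}\subseteq N(v)$, hence $C\subseteq N[v]$. Thus $N[v]=C$, and since $C$ is a clique, $N[v]$ is a clique, so $v$ is simplicial and $C$ is the corresponding simplicial clique.

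With this reformulation in hand, the conclusion follows by taking contrapositives. The statement ``$C$ is \emph{not} simplicial'' is equivalent to ``no vertex $v\in C$ satisfies $N(v)\subseteq C$'', which is in turn equivalent to ``every vertex $v\in C$ has at least one neighbor in $V(G)\setminus C$'', i.e.\ to ``$C$ is dominated by $V(G)\setminus C$''. This chain of equivalences is exactly the assertion of the lemma.

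The argument is essentially a definition chase, so there is no substantive obstacle; the only point demanding care is the reformulation step, specifically the observation that the simplicial vertex witnessing a simplicial clique $C$ must itself lie in $C$ (since $v\in N[v]$), and that within a clique the condition $N[v]=C$ is equivalent to the apparently weaker condition $N(v)\subseteq C$. Once these two facts are pinned down, the equivalence reduces to a negation and the proof is complete.
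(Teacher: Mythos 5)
Your proof is correct and follows essentially the same route as the paper's: both reduce ``$C$ is simplicial'' to the condition ``there exists $v\in C$ with $N(v)\subseteq C$'' (using that $C\subseteq N[v]$ for every $v$ in a clique $C$) and then negate. Your write-up just spells out both directions of that reformulation more explicitly than the paper does.
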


\begin{proof}
Note that $C$ is simplicial if and only if there exists a vertex $v\in C$ such that $C = N[v]$.
However, since $C$ is a clique, we have $C\subseteq N[v]$ for all vertices $v\in C$.
It follows that $C$ is simplicial if and only if there exists a vertex $v\in C$ such that $N(v)\subseteq C$. Equivalently,
$C$ is not simplicial if and only every vertex in $C$ has a neighbor outside $C$, that is, $C$ is dominated by $V(G)\setminus C$.
\qed\end{proof}

\begin{sloppypar}
A similar characterization is known for strong cliques (see, e.g., the remarks following~\cite[Theorem 2.3]{HMR2018}).
\end{sloppypar}

\begin{lemma}\label{lem:strong-cliques}
A clique $C$ in a graph $G$ is not strong if and only if it is dominated by a stable set $S \subseteq V(G)\setminus C$.
\end{lemma}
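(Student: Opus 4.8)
The plan is to unwind the definition of a strong clique and argue each implication separately, in both cases relying on the interaction between maximal stable sets and domination. Recall that $C$ is strong precisely when it meets every maximal stable set, so $C$ is \emph{not} strong if and only if there exists a maximal stable set $S$ with $S\cap C=\emptyset$; equivalently, $S\subseteq V(G)\setminus C$. I would state this reformulation at the outset, since both directions are really statements about the existence of a suitable maximal stable set avoiding $C$.

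For the forward direction, I would start from a maximal stable set $S\subseteq V(G)\setminus C$ witnessing that $C$ is not strong, and show that $S$ dominates $C$. Fix any $v\in C$. Since $v\notin S$ and $S$ is a \emph{maximal} stable set, the set $S\cup\{v\}$ cannot be stable, so $v$ must have a neighbor in $S$. As $v$ was arbitrary, every vertex of $C$ has a neighbor in the stable set $S$, which is exactly the assertion that $C$ is dominated by the stable set $S\subseteq V(G)\setminus C$.

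For the converse, I would begin with an arbitrary stable set $S\subseteq V(G)\setminus C$ dominating $C$ and extend it greedily to a maximal stable set $S'\supseteq S$ (such an extension always exists). The key observation is that $C\cap S'=\emptyset$: any $v\in C$ has, by the domination hypothesis, a neighbor in $S\subseteq S'$, so $v$ cannot belong to the stable set $S'$. Hence $S'$ is a maximal stable set disjoint from $C$, certifying that $C$ is not strong.

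There is no substantial obstacle here; the argument is a direct double implication built on the definitions. The only point requiring a little care is the converse, where one must ensure that enlarging $S$ to a maximal stable set $S'$ does not introduce any vertex of $C$ — and this is guaranteed precisely because domination forces every vertex of $C$ to have a neighbor already in $S$, hence in $S'$.
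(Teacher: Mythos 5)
Your proof is correct and complete. Note that the paper itself does not prove this lemma at all: it is stated as a known result, with a pointer to the remarks following Theorem~2.3 of the cited work of Hujdurovi\'c et al. Your argument --- reformulating ``not strong'' as the existence of a maximal stable set disjoint from $C$, using maximality of $S$ to get domination in the forward direction, and extending a dominating stable set $S$ to a maximal stable set $S'$ while observing that domination forces $S'\cap C=\emptyset$ in the converse --- is exactly the standard proof one would supply, and it handles the only delicate point (that the extension $S'$ cannot absorb vertices of $C$) correctly.
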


Since every simplicial clique is strong (cf.~Lemmas~\ref{lem:simplicial-cliques} and~\ref{lem:strong-cliques}), we obtain the following.

\begin{fact}\label{obs:clique simplicial}
Every clique simplicial graph is CIS.
\end{fact}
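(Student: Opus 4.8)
The plan is to first isolate the auxiliary claim that every simplicial clique is strong, and then to observe that this immediately yields the statement: by definition a graph is CIS precisely when every maximal clique is strong, and a graph is clique simplicial precisely when every maximal clique is simplicial, so the auxiliary claim bridges the two notions directly.

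For the auxiliary claim I would argue entirely from the two domination characterizations already established. Let $C$ be a simplicial clique in $G$. By the contrapositive of Lemma~\ref{lem:simplicial-cliques}, the simpliciality of $C$ means that $C$ is \emph{not} dominated by $V(G)\setminus C$; equivalently, some vertex of $C$ has no neighbor outside $C$. The key observation is that domination by a vertex set is monotone: if $S\subseteq S'\subseteq V(G)\setminus C$ and $C$ is dominated by $S$, then $C$ is dominated by $S'$ as well, since every vertex of $C$ keeps its neighbor in the larger set. Taking $S'=V(G)\setminus C$, it follows that if $C$ were dominated by \emph{any} set $S\subseteq V(G)\setminus C$, it would in particular be dominated by $V(G)\setminus C$, contradicting simpliciality. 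Hence no subset of $V(G)\setminus C$ dominates $C$; in particular, no \emph{stable} set $S\subseteq V(G)\setminus C$ dominates $C$. By the contrapositive of Lemma~\ref{lem:strong-cliques}, $C$ is strong.

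To conclude, suppose $G$ is clique simplicial, so that every maximal clique of $G$ is simplicial. By the auxiliary claim, every maximal clique is then strong, which is exactly the defining property of a CIS graph.

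I do not anticipate a genuine obstacle here: the argument is a short chaining of the two equivalences, and the only point requiring care is the ``in particular'' step, where one must note that failing to be dominated by the whole complement $V(G)\setminus C$ already rules out domination by every subset, and hence by every stable subset, so that the stable-set hypothesis of Lemma~\ref{lem:strong-cliques} can never be met.
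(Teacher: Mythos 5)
Your proof is correct and follows exactly the paper's route: the paper derives this fact directly from Lemmas~\ref{lem:simplicial-cliques} and~\ref{lem:strong-cliques}, the implicit point being precisely your monotonicity observation that a clique not dominated by all of $V(G)\setminus C$ cannot be dominated by any subset, in particular by any stable set. Your write-up just makes explicit what the paper compresses into ``cf.~Lemmas~\ref{lem:simplicial-cliques} and~\ref{lem:strong-cliques}''.
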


For the algorithmic results in Section~\ref{sec:algo}, we will need the following lemma, which appears as an exercise in~\cite{MR2063679}.

\begin{lemma}\label{lemma:list-sort}
There is a linear-time algorithm that takes as input a multigraph $G = (V,E)$ given by adjacency lists and a linear ordering $\sigma = (v_1,\ldots, v_n)$ of its vertex set, and computes the adjacency lists of $G$ in which the neighbors of each vertex are sorted increasingly with respect to~$\sigma$.
\end{lemma}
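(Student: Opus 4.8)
The plan is to avoid the naive strategy of sorting each adjacency list separately, since preparing a counting-sort bucket array of size $n$ for every vertex would cost $\Theta(n\cdot|V|)$ in total rather than linear time. Instead I would perform a single global pass that builds all the sorted lists incrementally, exploiting the order $\sigma$ to obtain the sorting essentially for free, much like the stable phase of a bucket sort.

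Concretely, first I would allocate, for every vertex $u\in V$, a new empty adjacency list $L(u)$; this initialization takes $O(|V|)$ time. Then I would iterate over the vertices in the given order $\sigma=(v_1,\ldots,v_n)$, and for $i=1,\ldots,n$ scan the original (unsorted) adjacency list of $v_i$; for each occurrence of a neighbor $u$ in that list I would append $v_i$ to $L(u)$. Assuming the vertices index an array of list heads, each append is $O(1)$, so the total running time of this pass is $O(|V|+\sum_i d(v_i))=O(|V|+|E|)$, which is linear. For a multigraph, parallel edges cause $u$ to appear with the correct multiplicity in the list of $v_i$, and hence $v_i$ is appended to $L(u)$ once per parallel copy, preserving multiplicities.

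The correctness argument would have two parts. First, completeness and multiplicity: each (copy of an) edge joining $v_i$ and $u$ appears once in the original list of $v_i$ and once in that of $u$, so processing $v_i$ places $v_i$ into $L(u)$ and processing $u$ places $u$ into $L(v_i)$; every directed occurrence is therefore recorded exactly once, and $L(u)$ ends up containing precisely the multiset of neighbors $N(u)$. Second, sortedness: because the outer loop visits the vertices strictly in $\sigma$-order and entries of each $L(u)$ are only ever appended at the end, the elements of $L(u)$ are inserted in increasing $\sigma$-order, so each $L(u)$ is sorted with respect to $\sigma$, as required.

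I would expect no genuine mathematical obstacle here; the only thing demanding care is the efficiency bookkeeping. The key (and easy-to-miss) point is that one must not sort list by list, and that the single global pass taken in $\sigma$-order is exactly what converts an otherwise superlinear computation into a linear one. The remaining verification — that appends are $O(1)$ given array-indexed list heads, and that each edge occurrence is touched exactly once — is routine.
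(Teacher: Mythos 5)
Your proposal is correct: the single pass over the vertices in $\sigma$-order with $O(1)$ appends to the target lists is precisely the canonical solution to this statement, which the paper itself does not prove but delegates to a textbook exercise (the reference following the lemma). Both the correctness argument (sortedness from append-only insertion in $\sigma$-order, multiplicities preserved for parallel edges) and the $O(|V|+|E|)$ accounting match the intended argument, so there is nothing to add.
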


\section{A characterization of diamond-free CIS graphs}\label{sec:diamond-free-CIS}

\begin{sloppypar}
We first derive a property of diamond-free graphs in which every induced $P_4$ is settled.
\end{sloppypar}

\begin{restatable}[]{lemma}{EdgeMaxClique}
\label{EdgeMaxClique}
Let $G$ be a connected diamond-free graph in which every induced $P_4$ is settled.
Then, either $G$ is complete bipartite, or
each edge $e\in E(G)$ that is a maximal clique in $G$ is also
a simplicial clique.
\end{restatable}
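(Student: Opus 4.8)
The plan is to prove the statement in its contrapositive form: I will assume that some edge $e=\{x,y\}$ which is a maximal clique fails to be simplicial, and from this I will deduce that $G$ must be complete bipartite. The starting observations are purely local. Since $\{x,y\}$ is a \emph{maximal} clique, $x$ and $y$ have no common neighbor, for a common neighbor would complete a triangle and contradict maximality. Since $e$ is not simplicial, Lemma~\ref{lem:simplicial-cliques} tells us that $e$ is dominated by $V(G)\setminus\{x,y\}$, so both sets $A:=N(x)\setminus\{y\}$ and $B:=N(y)\setminus\{x\}$ are nonempty. The absence of a common neighbor then yields $A\cap B=\emptyset$, together with $a\not\sim y$ for every $a\in A$ and $b\not\sim x$ for every $b\in B$.

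The first key step is to show that $A$ is completely joined to $B$. Given $a\in A$ and $b\in B$, I would suppose for contradiction that $a\not\sim b$. Then $(a,x,y,b)$ is an induced $P_4$: the edges $ax$, $xy$, $yb$ are present while $ay$, $xb$, $ab$ are all absent. Since every induced $P_4$ in $G$ is settled, there would be a vertex adjacent to both $x$ and $y$ — but that vertex would be a common neighbor of $x$ and $y$, which does not exist. Hence $a\sim b$, so every vertex of $A$ is adjacent to every vertex of $B$. This is the conceptual heart of the argument, where the settledness hypothesis is converted into a complete-join statement.

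Next I would invoke diamond-freeness to see that $A$ and $B$ are stable sets: if two distinct $a,a'\in A$ were adjacent, then choosing any $b\in B$ (nonempty) the set $\{x,a,a',b\}$ would induce a diamond, as all of its pairs except $xb$ are edges; the same reasoning applies to $B$. Setting $X:=\{x\}\cup B$ and $Y:=\{y\}\cup A$, a direct check of all pairs then shows that $G[X\cup Y]$ is complete bipartite with parts $X$ and $Y$: every cross pair ($xy$, $xa$, $yb$, $ab$) is an edge, and neither part contains an edge.

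The final step, which I expect to be the main obstacle, is to close off this ``core'' and rule out any vertex outside $X\cup Y$. If $V(G)\neq X\cup Y$, then by connectedness some $w\notin X\cup Y$ has a neighbor in $X\cup Y$. Because $N(x)\cup N(y)\subseteq X\cup Y$, such a $w$ is adjacent to neither $x$ nor $y$, so its neighbor lies in $A\cup B$, say $w\sim a$ with $a\in A$. Then $(w,a,x,y)$ is an induced $P_4$, and settling it demands a vertex adjacent to both $a$ and $x$ yet non-adjacent to $w$ and $y$; any such vertex lies in $N(x)=\{y\}\cup A$ and is adjacent to $a$, which forces it into $A$ and contradicts that $A$ is stable. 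The symmetric argument disposes of the case $w\sim b$ for $b\in B$. Hence no external vertex meets $X\cup Y$, so $V(G)=X\cup Y$ and $G\cong K_{|X|,|Y|}$ with $|X|,|Y|\ge 2$. The delicate point throughout is that the settledness hypothesis must be exploited twice — once to force the complete join and once to prevent pendant attachments — each time by observing that the required settling vertex is trapped into a stable set or into the empty set of common neighbors of $x$ and $y$.
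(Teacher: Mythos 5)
Your proof is correct and follows essentially the same route as the paper's: both use settledness of $(a,x,y,b)$ together with the non-existence of a common neighbor of $x,y$ to get the complete join between $A$ and $B$, diamond-freeness to make $A$ and $B$ stable, and then connectedness plus a second non-settled $P_4$ (your $(w,a,x,y)$, the paper's $(v,u,x,w)$) to exclude outside vertices. The only difference is presentational: you argue the contrapositive and spell out explicitly why the settling vertices cannot exist, whereas the paper argues by contradiction and leaves those checks implicit.
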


%
\begin{proof}
 Suppose that $G$ is not complete bipartite. 
 Suppose that there is an edge $uv\in E(G)$ such that $\{u,v\}$ is a non-simplicial maximal clique in $G$. Then, by Lemma~\ref{lem:simplicial-cliques}, $N(v)\setminus\{u\}\neq\emptyset$ and $N(u)\setminus\{v\}\neq\emptyset$. If $x\in N(u)\setminus\{v\}$ and $y\in N(v)\setminus\{u\}$, then $xy\in E(G)$ otherwise the path $(x,u,v,y)$ is a non-settled induced $P_4$ in $G$, which is a contradiction. Moreover, since $G$ is diamond-free, for any two different vertices $x,x'\in N(u)\setminus\{v\}$ and two different vertices $y,y'\in N(v)\setminus\{u\}$ we have $xx',yy'\notin E(G)$. This, together with the fact that $N(u)\cap N(v)=\emptyset$ implies that $N(u)$ and $N(v)$ are stable sets. Then, neither $u$ nor $v$ are contained in a triangle. Hence, $G[N[u]\cup N[v]]$ is complete bipartite.
  Since $G$ is connected and not complete bipartite, we can assume without loss of generality that there is a vertex $w\in V(G)\setminus(N[u]\cup N[v])$ such that $wx\in E(G)$ for some $x\in N(u)\setminus\{v\}$. Then, $(v,u,x,w)$ is a non-settled induced $P_4$ in $G$, a contradiction.   
\qed\end{proof}

\begin{sloppypar}
Next, we analyze the global properties of an induced subgraph $H$ in a connected diamond-free CIS graph $G$ such that
$H\cong L(K_{n,n})$ for some $n\geq3$.

\begin{restatable}[]{lemma}{lemgridinCISgraph}
\label{lem:grid-in-CIS-graph}
Let $G$ be a connected diamond-free CIS graph. Suppose that $G$ contains an induced subgraph $H$ such that $H\cong L(K_{n,n})$ for some $n\geq3$. Then, either all the $2n$ copies of $K_n$ in $H$ are maximal cliques in $G$ or none of them is a maximal clique in $G$. Furthermore, if all the $2n$ copies of $K_n$ are maximal cliques in $G$, then $G=H$.
\end{restatable}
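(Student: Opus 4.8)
The plan is to realize \(H\cong L(K_{n,n})\) as the \(n\times n\) rook's graph, with vertices the cells \((i,j)\), \(i,j\in[n]:=\{1,\dots,n\}\), two cells adjacent iff they share a row or a column. Under this description the \(2n\) copies of \(K_n\) are the rows \(R_i=\{(i,j):j\in[n]\}\) and the columns \(C_j=\{(i,j):i\in[n]\}\), which are exactly the maximal cliques of \(H\). Since \(G\) is diamond-free, each line (row or column), being a clique of \(G\), lies in a unique maximal clique of \(G\) (by Lemma~\ref{lem:maximal-cliques-diamond-free} and Corollary~\ref{vNotInMaxClique}), and a line is a maximal clique of \(G\) if and only if it coincides with that containing clique. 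The key extraction principle I would record first is: if a row \(R_i\) is not a maximal clique of \(G\), then some \(u\in V(G)\setminus V(H)\) satisfies \(N(u)\cap V(H)=R_i\). Non-maximality gives a vertex \(u\notin R_i\) adjacent to all of \(R_i\); no vertex of \(H\) dominates a whole row when \(n\ge2\), so \(u\notin V(H)\); and if \(M\) is the maximal clique containing \(R_i\cup\{u\}\), then every cell \((k,j)\) with \(k\ne i\) lies outside \(M\) and has the single neighbor \((i,j)\) in \(R_i\subseteq M\), so by Corollary~\ref{vNotInMaxClique} it is non-adjacent to \(u\). The statement for columns is symmetric.

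The engine of both parts is the CIS hypothesis in the form ``every maximal clique of \(G\) is strong, hence meets every maximal stable set'': to certify that a line is not a maximal clique of \(G\) it suffices to exhibit a maximal stable set of \(G\) disjoint from it. For the all-or-nothing alternative, suppose some copy of \(K_n\) is non-maximal; by the symmetric roles of rows and columns we may take it to be a row \(R_1\) with extracted witness \(u\), so \(N(u)\cap V(H)=R_1\). To show that a column \(C_j\) is non-maximal I would take the stable set \(\{u\}\cup D\), where \(D=\{(k,\pi(k)):k\ne 1\}\) places, in each row \(k\ne1\), one cell in a column \(\pi(k)\ne j\), using any bijection \(\pi\colon\{2,\dots,n\}\to[n]\setminus\{j\}\); these cells lie outside \(R_1=N(u)\cap V(H)\) automatically. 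Extending to a maximal stable set \(S^{*}\), the cell \((1,j)\) is blocked by \(u\) and each \((k,j)\), \(k\ne1\), by its row-mate \((k,\pi(k))\in D\), so \(S^{*}\cap C_j=\emptyset\) and \(C_j\) is non-maximal. Thus every column is non-maximal; each then yields a witness \(u'_j\) with \(N(u'_j)\cap V(H)=C_j\), and the same construction with rows and columns interchanged shows every row is non-maximal. Hence a single non-maximal copy forces all \(2n\) to be non-maximal, which is the stated dichotomy.

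For the second assertion, assume all \(2n\) lines are maximal cliques of \(G\) and suppose \(G\ne H\). As \(G\) is connected there is \(u\in V(G)\setminus V(H)\) with a neighbor in \(H\); by Corollary~\ref{vNotInMaxClique} the set \(X:=N(u)\cap V(H)\) meets every line at most once, so it is a nonempty partial permutation. Pick a row \(R_r\) meeting \(X\), say in column \(c\). To block \(R_r\) I would take \(\{u\}\cup D\), where \(D\) places in every column \(j\ne c\) one cell \((k_j,j)\) with \(k_j\ne r\) and \((k_j,j)\notin X\); then \((r,c)\) is blocked by \(u\) and each \((r,j)\), \(j\ne c\), by its column-mate, so a maximal extension misses \(R_r\), contradicting that \(R_r\) is strong. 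The existence of \(D\) is the one genuinely combinatorial point: in the bipartite graph between the \(n-1\) columns \(\ne c\) and the \(n-1\) rows \(\ne r\), forbidding only the cells of the partial permutation \(X\) leaves every column with at least \(n-2\ge1\) admissible rows and every admissible row reachable from at least two columns, which verifies Hall's condition for \(n\ge3\); a perfect matching then yields \(D\). This contradiction gives \(V(G)=V(H)\), and since \(H\) is induced, \(G=H\).

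The main obstacle, and the reason a direct argument through Proposition~\ref{prop:CIS-properties} does not suffice, is that the rook's graph is ``self-settling'': for \(n\ge3\) every induced \(P_4\) lying inside \(H\) is already settled by a vertex of \(H\), so local settledness yields no control over vertices outside \(H\) or over which lines are maximal. The decisive ingredient is therefore the global construction of a maximal stable set that dodges a prescribed line, whose only nontrivial part is the Hall-type verification in the previous paragraph; the remaining bookkeeping (identifying the \(2n\) copies of \(K_n\) and the repeated symmetric applications) is routine.
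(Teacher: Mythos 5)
Your proof is correct and follows essentially the same route as the paper's own: the same witness extraction via diamond-freeness (you use Corollary~\ref{vNotInMaxClique}, the paper exhibits an explicit diamond), the same diagonal/partial-permutation stable sets played against the CIS property (you phrase it as a maximal stable set missing the clique, the paper as a dominating stable set via Lemma~\ref{lem:strong-cliques}), and the same matching argument for the $G=H$ part (you via Hall's theorem, the paper via K\H{o}nig's theorem). The only blemish is your claim that every admissible row is reachable from at least two columns---for $n=3$ it may be only one---but Hall's condition still holds because each row is excluded by at most one column of the partial permutation $X$, so this slip does not affect correctness.
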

\end{sloppypar}

\begin{proof}
Let $V(H)=\{(i,j)\mid 1\leq i\leq j\leq n\}$ and $E(H)=\{(i,j)(k,l)\mid \mbox{either }i=k\;\mbox{ or\; $j=l$},$ $\mbox{but not both}\}$. For each $i\in\{1,\dots,n\}$ let $R_i=\{(i,j)\mid 1\leq j\leq n\}$ and for each
$j\in\{1,\dots,n\}$, let $L_j=\{(i,j)\mid 1\leq i\leq n\}$. Hence, for each $i,j\in\{1,\dots, n\}$, we have $G[R_i]\cong G[L_j]\cong K_n$.
Observe that for any two $i,j\in\{1,\dots,n\}$ we have that $R_i\cap L_j=\{(i,j)\} $. Suppose that there is a copy of $K_n$ in $H$ which is a maximal clique in $G$, without loss of generality let $L_1$ be such a clique.
Now, we show that for each $i\in\{1,\dots,n\}$ we have that $R_i$ is also a maximal clique in $G$. For this, suppose the contrary and let, without loss of generality, $R_1$ be a non-maximal clique in $G$. Consider a maximal clique $R_1'$ in $G$ such that $R_1\subset R_1'$ and let $v\in R'_1\setminus R_1$. Note that $v \notin V(H)$. Observe that for any $i,j\in\{2,\dots,n\}$, if $(i,j)\in N_G(v)$ then $G[\{(1,j-1),(1,j),(i,j), v\}]$ is a diamond, a contradiction. Therefore, for all $j\in\{1,\dots,n\}$ we have $N_G(v)\cap L_j=\{(1,j)\}$, implying that $\{v,(2,2), (3,3),\dots, (n,n)\}$ is a stable set in $G- L_1$ dominating $L_1$, a contradiction by Lemma \ref{lem:strong-cliques}. Therefore, all the cliques $R_i$ are also maximal cliques in $G$. By symmetry, under the same argument, we conclude that for all $j\in\{1,\dots,n\}$ the cliques $L_j$ are also maximal cliques in $G$.

Now, suppose that for all $i,j\in\{1,\dots,n\}$, $R_i$ and $L_j$ are maximal cliques in $G$ and that $G\neq H$. Since $G$ is connected, there exists a vertex $u\in V(H)$ and a vertex $v\in V(G)\setminus V(H)$ such that $uv\in E(G)$. Without loss of generality let $u=(1,1)$. By Corollary~\ref{vNotInMaxClique}, we have that $N_G(v)\cap (R_1\cup L_1)=\{u\}$, and for all $i,j\in\{2,\dots, n\}$ we have $|N_G(v)\cap R_i|\leq1$ and $|N_G(v)\cap L_j|\leq 1$. Let $U=(N_G(v)\cap V(H))$. Hence, $U$ is a stable set in $(H -L_1) -R_1 \cong L(K_{n-1,n-1})$, which corresponds to a matching $M$ in $K_{n-1,n-1}$. By  K\" onig's Theorem \cite{Konig1916}, every $k$-regular bipartite graph has a perfect matching. Hence there exists $U' \subseteq V(H) \setminus (L_1 \cup R_1 \cup U)$, which corresponds to a perfect matching in $K_{n-1,n-1} -M$, implying $U'$ is a stable set in $G$ of size $n-1$. Hence, the set $\{v\} \cup U'$ is a stable set in $G - L_1$ dominating $L_1$, which is a contradiction by Lemma~\ref{lem:strong-cliques}.
\qed\end{proof}

We will also need the following technical lemma about diamond-free graphs.

\begin{restatable}[]{lemma}{lknnstablediagonal}
\label{lknn_stable_diagonal}
Let $G$ be a diamond-free graph. Let $C'=\{v_{1},v_{2},\ldots,v_{\ell}\}$ be a clique in $G$ and let $C_1,\ldots,C_\ell$ be maximal cliques in $G$ such that for every $i\in\{1,\ldots,\ell\}$ we have $C_i \cap C'=\{v_{i}\}$, $2\leq \ell=|C'|\leq |C_i|$ and  $C_i \cap C_j = \emptyset$ for any $j \neq i$. Then,  there exists a stable set $S\subseteq \bigcup_{i=1}^\ell C_i$ in $G-C'$ of size $\ell-1$ such that $|N_G(S)\cap C'|=|S|$. Furthermore, if $\max_{i=1}^\ell |C_i| > \ell$, then there exists a stable set $S\subseteq \bigcup_{i=1}^\ell C_i$ in $G-C'$
such that $|S| = \ell$ and $S$ dominates $C'$.
\end{restatable}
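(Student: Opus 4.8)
The plan is to construct both stable sets greedily, picking exactly one vertex from each relevant clique while exploiting the fact that in a diamond-free graph the interaction between the cliques $C_i$ is very sparse. First I would record two structural observations, both immediate from Corollary~\ref{vNotInMaxClique}. \emph{Observation (a):} for each $i$ and each $s\in C_i\setminus\{v_i\}$ we have $N_G(s)\cap C'=\{v_i\}$; indeed $s$ is adjacent to $v_i$ because $C_i$ is a clique, while for $k\neq i$ the vertex $v_k$ lies outside the maximal clique $C_i$ and already has the neighbor $v_i$ in $C_i$, so by Corollary~\ref{vNotInMaxClique} it has no further neighbor in $C_i$, forcing $v_k\notin N_G(s)$. \emph{Observation (b):} for $i\neq j$ and any $x\in C_i\setminus\{v_i\}$, since $x\notin C_j$ (the cliques are disjoint) and $C_j$ is a maximal clique, Corollary~\ref{vNotInMaxClique} gives $|N_G(x)\cap C_j|\le 1$, so $x$ has at most one neighbor in $C_j\setminus\{v_j\}$.

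Observation~(a) immediately controls $N_G(S)\cap C'$ for any transversal-type set: if $S$ consists of exactly one vertex chosen from each of $t$ distinct cliques $C_i$ (each chosen in $C_i\setminus\{v_i\}$, hence outside $C'$), then $N_G(S)\cap C'$ is precisely the corresponding set of the $v_i$, so $|N_G(S)\cap C'|=t=|S|$; and if $t=\ell$ then, since each $v_k$ is adjacent to the chosen vertex of $C_k$, the set $S$ dominates $C'$. Thus both parts reduce to the same task: choose one vertex from each clique in a suitable family so that the chosen vertices are pairwise non-adjacent.

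For the first part I would process the cliques $C_1,\dots,C_{\ell-1}$ in order: having selected pairwise non-adjacent $s_1,\dots,s_{k-1}$, pick $s_k\in C_k\setminus\{v_k\}$ avoiding all neighbors of the previously chosen vertices. By Observation~(b) each earlier vertex forbids at most one vertex of $C_k\setminus\{v_k\}$, so at most $k-1\le \ell-2$ vertices are blocked; since $|C_k\setminus\{v_k\}|\ge \ell-1$, an admissible choice always remains, and $S=\{s_1,\dots,s_{\ell-1}\}$ is the desired stable set. For the second part, assume $|C_m|>\ell$ for some $m$; after relabelling take $m=\ell$, so $|C_\ell\setminus\{v_\ell\}|\ge\ell$. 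Running the same greedy through all $\ell$ cliques with $C_\ell$ processed last, the steps $k\le\ell-1$ succeed exactly as before, and at the final step at most $\ell-1$ vertices of $C_\ell\setminus\{v_\ell\}$ are blocked while $\lvert C_\ell\setminus\{v_\ell\}\rvert\ge\ell$, so an admissible $s_\ell$ still exists, yielding a stable set of size $\ell$ dominating $C'$.

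The only genuine subtlety—and what forces the extra hypothesis—is the tightness of the counting in the greedy step: each clique supplies only $|C_i|-1\ge\ell-1$ candidates while up to $\ell-1$ of them may be blocked, so without the surplus guaranteed by $|C_m|>\ell$ the last clique could be entirely blocked; arranging the large clique to be processed last is precisely what resolves this. I expect the bookkeeping behind Observations~(a) and~(b) to be the part most prone to slips, but each reduces to a single application of Corollary~\ref{vNotInMaxClique}.
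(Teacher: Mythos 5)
Your proof is correct and takes essentially the same approach as the paper: a greedy construction selecting one vertex from each clique $C_i$, using Corollary~\ref{vNotInMaxClique} to show each previously chosen vertex blocks at most one candidate in the current clique, and placing the largest clique last so that the surplus $|C_\ell|>\ell$ saves the final step. The only difference is presentational: you state explicitly (your Observation~(a)) that each chosen vertex has $v_i$ as its unique neighbor in $C'$, a fact the paper uses implicitly when asserting $|N_G(S_{\ell-1})\cap C'|=|S_{\ell-1}|$ and domination of $C'$.
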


\begin{proof}
Without loss of generality let $|C_1| \leq |C_2| \leq \ldots \leq |C_{\ell}|$.
By Corollary~\ref{vNotInMaxClique} for any $i,j\in\{1,\ldots,\ell\}$ with $j\neq i$, each vertex from $C_i\setminus C'$ is adjacent to at most one other vertex in $C_j\setminus C'$.
Let $S_0=\emptyset$ be our starting stable set that we are going to build up.
In the $j$-th step ($1\leq j<\ell $), since $|(C_j\setminus C') \cap N_G(S_{j-1})| \leq |S_{j-1}|=j-1<\ell-1\leq|C_j|-1$, there is a vertex $u_j\in C_j\setminus C'$ such that $N_G(S_{j-1})\cap\{u_j\}=\emptyset$. Let $S_j=S_{j-1} \cup \{u_j\}$. Hence, $S_j\subseteq \bigcup_{i=1}^j C_i$ is a stable set in $G-C'$ of size $j$ dominating $\{v_1,\dots,v_j\}$. If $|C_{\ell}|=\ell$, then $S_{\ell-1}\subseteq \bigcup_{i=1}^{\ell-1} C_i$ is a stable set in $G-C'$ of size $\ell-1$ dominating $\{v_1,\dots,v_{\ell-1}\}$ and such that $|N_G(S_{\ell-1})\cap C'|=|S_{\ell-1}|$.
If $|C_{\ell}|>\ell$, then $|(C_\ell\setminus C') \cap N_G(S_{\ell-1})| \leq |S_{\ell-1}|=\ell-1 <|C_\ell|-1$, implying that there is a vertex $u_\ell\in(C_\ell\setminus C')$ such that $N_G(S_{\ell-1})\cap\{u_\ell\}=\emptyset$. Let $S_\ell=S_{\ell-1} \cup \{u_\ell\}$.
Hence, $S_\ell$ is a stable set of size $\ell$ in $G-C'$ dominating $C'$.
\qed\end{proof}

We can now characterize the connected diamond-free CIS graphs. The result shows that, apart from the clique simplicial diamond-free graphs, there are only two highly structured infinite families of connected diamond-free CIS graphs.

\thmMain*
%

\begin{proof}
Each of the three conditions is sufficient for $G$ to be CIS. Complete bipartite graphs are $P_4$-free, and every $P_4$-free graph is CIS (see~\cite{MR818265}). Graphs of the form $L(K_{n,n})$ are also CIS
(see, e.g.,~\cite[Proposition 3.3]{MR3278773}). By Fact~\ref{obs:clique simplicial}, every clique simplicial graph is CIS.

To prove that there are no other connected diamond-free CIS graphs, let $G$ be a connected diamond-free CIS graph such that $G$ is not clique simplicial, $G\ncong K_{m,n}$ (with $m,n\geq2$), and $G\ncong L(K_{n,n})$ (with $n\geq3$). Among all maximal non-simplicial cliques in $G$, pick one with the smallest size and name it $C$. Observe that, by Proposition~\ref{prop:CIS-properties} and Lemma~\ref{EdgeMaxClique}, $k\geq3$. Let $C=\{v_{1},\ldots,v_{k}\}$. For each vertex $v_{i}\in C$ fix a maximal clique $C_i$ containing $v_{i}$ such that $C_i\neq C$, and let $A_i=C_i\setminus\{v_{i}\}$. Hence, for each $i\in\{1,\dots,k\}$ we have $A_i\neq\emptyset$.
 Let $H$ be the subgraph of $G$ induced by $\bigcup_{j=1}^kC_j$. Note that for all $i$, if $C_i$ is simplicial in $H$, then $C_i=N_H(x_i)$ for some $x_i\in A_i$. Let $\ell$ denote the number of cliques among all $C_i$'s such that $|C_i|\geq k$. Without loss of generality let $C_1,\ldots,C_\ell$ be  such that $k\leq|C_1|\leq\ldots\leq|C_\ell|$. Then, by minimality of $C$, for every $\ell<i\leq k$ we have $C_i$ is a simplicial clique in $G$. For each $\ell<i\leq k$ let $u_i\in C_i$ be a simplicial vertex in $G$, hence $u_i\in A_i$.
 If $\ell=0$, then $S=\{u_i\mid 1\leq i\leq k\}$ is a stable set in $G-C$ dominating $C$, implying by Lemma~\ref{lem:strong-cliques} that $C$ is not strong, a contradiction. If $\ell=1$ then, $S=\{u_i\mid 2\leq i\leq k\}\cup\{u\}$ is a stable set in $G-C$ dominating $C$, for any $u\in A_1$, a contradiction.
 Hence, $\ell\geq2$.
 Let $C'=\{v_{1},v_{2},\dots,v_{\ell}\}$. Then, the cliques $C',C_1,\dots, C_\ell$ satisfy the hypothesis of Lemma~\ref{lknn_stable_diagonal}. Hence, if $\ell<k$, there is a stable set $S\subseteq\bigcup_{i=1}^{\ell}A_i$ in $G-C'$ dominating $C'$. Hence, $S\cup\{u_{\ell+1},\dots, u_k\}$ is a stable set in $G-C$ dominating $C$, a contradiction. Therefore, $\ell=k$. Moreover, if $|C_k|>k$ by Lemma~\ref{lknn_stable_diagonal} we have a stable set in $G-C$ of size $k$ dominating $C$, a contradiction. Hence, $|C_k|=k$ implying that $|C_i|=k$ for any $i\in\{1,\dots,k\}$.

Suppose there exist a vertex $v \in A_i$, $i \in \{1,\ldots,k\}$ with $N_H(v) \cap A_j=\emptyset$ for some $j \in \{1,\ldots,k\} \setminus \{i\}$. Without loss of generality let $i=1$ and $j=k$. Apply Lemma \ref{lknn_stable_diagonal} to the first $k-1$ $C_i$'s to construct a stable set $S\subseteq\left(\bigcup_{i=2}^{k-1}A_i\right)\cup\{v\}$ in $G-C$ of size $k-1$ in such a way that $S_1=\{v\}$, that is, $v$ is the first vertex selected in the construction of the stable set $S$. This way we get a stable $S$ disjoint from $A_k$. Moreover, by Corollary~\ref{vNotInMaxClique} and since $N_G(v) \cap A_k = \emptyset$ we have $|N_H(S) \cap A_j|\leq |S|-1=(k-1)-1=k-2$. This implies there exists a vertex $u \in A_j $ disjoint from $S$ that is not adjacent to $S$. Hence $S\cup\{u\}$ is a stable set in $G-C$ of size $k$ that dominates $C$, a contradiction.

Therefore, for each $i\in\{1,\dots,k\}$ and any vertex $v\in A_i$ we have $|N_H(v)\cap A_j|=1$ for any $j \in \{1,\ldots,k\} \setminus \{i\}$. Observe that if for any $i\in\{1,\dots,k\}$ and any $x\in A_i$ we have $N_G(x)\cap\left(\bigcup_{j\neq i}A_j\right)\cong K_{k-1}$, then $H\cong L(K_{k,k})$ implying, by Lemma~ \ref{lem:grid-in-CIS-graph}, that $G\cong L(K_{k,k})$, a contradiction.
Hence, there are three different numbers $i,j,j'\in\{1,\dots,k\}$  and three vertices $u,v,w$ such that $u\in A_i$, $v\in A_j$, $w\in A_{j'}$, $u,v\in N(w)$ and $uw\notin E(G)$.
By symmetry, let us assume that $i=1$, $j=2$ and $j'=k$. Use the proof of Lemma \ref{lknn_stable_diagonal} to generate a stable set $S$ of size $k-1$ in such a way that $S_1=\{u\}$, $S_2=\{u,v\}$ and $S_{k-1}\subseteq\bigcup _{i'=1}^{k-1}A_{i'}$.
Since $N_G(\{u,v\}) \cap A_k=\{w\}$ we must have $|N_G(S_{k-1}) \cap A_k|\leq |S_{k-1}|-1=(k-1)-1=k-2$. This implies that there exist a vertex $v' \in A_k$ that is not adjacent to $S_{k-1}$. Hence, $S_{k-1} \cup \{v'\}$ is a stable set in $G-C$ dominating $C$, a contradiction.
\qed\end{proof}

\section{Consequences}
\label{sec:consequences}

We now discuss several consequences of Theorem~\ref{thm:main}.

\subsection{Large cliques or stable sets in diamond-free CIS graphs}\label{sec:EH}

Recall that every diamond-free graph $G$ has either a clique or a stable set of size at least
$|V(G)|^{1/3}$. For diamond-free CIS graphs, Theorem~\ref{thm:main} leads to an improvement of the exponent to $1/2$.

\begin{restatable}[]{lemma}{lemcliquesimplicialalphaomega}
\label{lem:clique simplicial-alpha-omega}
Let $G$ be a clique simplicial graph. Then $\alpha(G)\cdot \omega(G) \ge |V(G)|$.
\end{restatable}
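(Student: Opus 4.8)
The plan is to cover $V(G)$ by at most $\alpha(G)$ cliques, each of size at most $\omega(G)$, which immediately yields $|V(G)|\le \alpha(G)\cdot\omega(G)$. The natural cliques to use are the closed neighborhoods of simplicial vertices: if $v$ is simplicial then $N[v]$ is a clique, so $|N[v]|\le \omega(G)$. First I would record the basic fact that every vertex of $G$ lies in a simplicial clique defined by a simplicial vertex: any vertex $u$ belongs to some maximal clique $C$, and since $G$ is clique simplicial, $C=N[w]$ for some simplicial vertex $w\in C$ (cf.\ the characterization in the proof of Lemma~\ref{lem:simplicial-cliques}); in particular $u\in N[w]$. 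Hence the set of all simplicial vertices dominates $G$ in the closed-neighborhood sense.

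The heart of the argument is to extract a single \emph{stable} set $S$ of simplicial vertices whose closed neighborhoods already cover everything, that is, $N[S]=V(G)$. I would take $S$ to be an inclusion-maximal stable set consisting only of simplicial vertices, and prove $N[S]=V(G)$ by contradiction. Suppose $u\notin N[S]$, and pick, using the fact above, a simplicial vertex $w$ with $u\in N[w]$. If $w\notin N[S]$, then $S\cup\{w\}$ is a larger stable set of simplicial vertices, contradicting maximality of $S$. So $w\in N[S]$: either $w\in S$, in which case $u\in N[w]\subseteq N[S]$, a contradiction; or $w$ is adjacent to some $s\in S$, whence $s\in N[w]$. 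Since $N[w]$ is a clique containing both $u$ and $s$, the vertices $u$ and $s$ are adjacent or equal, giving $u\in N[s]\subseteq N[S]$, again a contradiction. This establishes $N[S]=V(G)$.

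Finally I would assemble the bound. Since $S$ is a stable set, $|S|\le \alpha(G)$, and since each $s\in S$ is simplicial, $|N[s]|\le \omega(G)$. Then
$$|V(G)| = |N[S]| = \Bigl|\bigcup_{s\in S} N[s]\Bigr| \le \sum_{s\in S}|N[s]| \le |S|\cdot\omega(G) \le \alpha(G)\cdot\omega(G),$$
as claimed. I expect the middle step to be the main obstacle: the point requiring care is guaranteeing that a maximal stable set of simplicial vertices is dominating, and in particular the case where the covering vertex $w$ is merely adjacent to $S$ rather than lying in $S$. This is resolved precisely by exploiting that $N[w]$ is a clique, which forces any neighbor of $w$ inside $S$ to be adjacent to $u$ as well.
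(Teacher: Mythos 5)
Your proof is correct, and it rests on the same covering idea as the paper's proof: produce a stable set $S$ of simplicial vertices whose closed neighborhoods cover $V(G)$, and then count, $|V(G)|\le\sum_{s\in S}|N[s]|\le|S|\cdot\omega(G)\le\alpha(G)\cdot\omega(G)$. The two proofs differ only in how $S$ is obtained, and the difference is a dual trade-off. The paper selects one simplicial vertex from each maximal clique $C_1,\ldots,C_k$; covering is then automatic (every vertex lies in some maximal clique, which is simplicial by hypothesis), but the claim that these $k$ selected vertices form a stable set is left implicit --- verifying it requires observing that a simplicial vertex of a maximal clique $C_i$ has closed neighborhood exactly $C_i$, so two adjacent (or equal) selected vertices would force $C_i=C_j$. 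You go the other way: you take an inclusion-maximal stable set of simplicial vertices, so stability is free, and you instead prove the covering property $N[S]=V(G)$ by the maximality argument, exploiting that $N[w]$ is a clique to pull an allegedly uncovered vertex $u$ into $N[s]$ for some $s\in S$. Both arguments are sound and of comparable length once the paper's implicit step is spelled out; yours has the minor advantages that every step is explicit and that the fact you isolate (in a graph whose vertices are all covered by simplicial cliques, any inclusion-maximal stable set of simplicial vertices dominates the whole vertex set) is a reusable statement in its own right.
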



%
\begin{proof}
Let $C_1,\ldots, C_k$ be all the maximal cliques of $G$. Since $G$ is clique simplicial, each $C_i$ is a simplicial clique. Selecting one simplicial vertex from each $C_i$ gives a stable set $S$ of size $k$, and
since every vertex is in some simplicial clique, we infer that $|V(G)|\le \sum_{i = 1}^k|C_i|\le |S|\cdot \omega(G) \le \alpha(G)\cdot \omega(G)$.
\qed\end{proof}

\thmEH*

\begin{proof}
First we show that it suffices to prove the statement for connected graphs. Let $G$ be a disconnected diamond-free CIS graph, let $C$ be a component of $G$ and let $G' = G-V(C)$. Then $C$ and $G'$ are diamond-free CIS graphs,
and by induction on the number of components we may assume that
$\alpha(C)\cdot \omega(C) \ge |V(C)|$ and $\alpha(G')\cdot \omega(G') \ge |V(G')|$.
Since  $\alpha(G) = \alpha(C)+\alpha(G')$ and $\omega(G) = \max\{\omega(C),\omega(G')\}$,
we obtain $|V(G)| = |V(C)|+|V(G')|\le \alpha(C)\cdot \omega(G)+ \alpha(G')\cdot \omega(G) = \alpha(G)\cdot \omega(G)$.

Now let $G$ be a connected diamond-free CIS graph. By Theorem~\ref{thm:main}, $G$ is either clique simplicial, $G\cong K_{m,n}$ for some $m,n\ge 2$, or $G\cong L(K_{n,n})$ for some $n\ge 3$. If $G$ is clique simplicial, then Lemma~\ref{lem:clique simplicial-alpha-omega} yields $|V(G)|\le \alpha(G)\cdot \omega(G)$. If $G\cong K_{m,n}$ for some $m,n\ge 2$, then $|V(G)| = m+n\le 2\cdot\max\{m,n\} = \omega(G)\cdot \alpha(G)$. Finally, if $G\cong L(K_{n,n})$, then $\alpha(G)$ equals the maximum size of a matching in $K_{n,n}$, that is, $\alpha(G) = n$,
and $\omega(G)$ equals the maximum degree of a vertex in $K_{n,n}$, that is, $\omega(G) = n$.
Thus, in this case equality holds, $|V(G)| = n^2 = \alpha(G)\cdot \omega(G)$.
\qed\end{proof}

\subsection{Testing the CIS property in the class of diamond-free graphs}\label{sec:algo}

\begin{sloppypar}
Our next consequence is a linear-time algorithm for testing the CIS property in the class of diamond-free graphs.
The bottleneck to achieve linearity is the recognition of the clique simplicial property.
Instead of checking this property directly, we check whether the graph is \emph{edge simplicial}, that is, every edge is contained in a simplicial clique.
Clearly, every clique simplicial graph is edge simplicial.
While the converse implication fails in general, the two properties are equivalent in the case of diamond-free graphs, where every edge is in a unique maximal clique%
\ifarxiv
(cf.~Lemma~\ref{lem:maximal-cliques-diamond-free})
\fi
.
\end{sloppypar}

\ifarxiv
\begin{fact}\label{obs:cs-es}
A diamond-free graph is clique simplicial if and only if it is edge simplicial.
\end{fact}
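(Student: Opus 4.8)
The plan is to prove the equivalence in Fact~\ref{obs:cs-es} by establishing both implications separately. The forward direction, that every clique simplicial graph is edge simplicial, is immediate and holds for arbitrary graphs: if every maximal clique is simplicial, then since every edge lies in some maximal clique, that maximal clique is a simplicial clique containing the edge. This direction requires no use of the diamond-free hypothesis and can be dispatched in a single sentence.

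The interesting direction is the converse, where the diamond-free assumption becomes essential. First I would invoke Lemma~\ref{lem:maximal-cliques-diamond-free}, which tells us that in a diamond-free graph every edge lies in a \emph{unique} maximal clique. The key observation is that this uniqueness lets us transfer the simplicial property from the edge level to the maximal-clique level. Concretely, suppose $G$ is diamond-free and edge simplicial, and let $C$ be an arbitrary maximal clique in $G$. Since $C$ is a clique with at least two vertices (a maximal clique that is a single vertex is an isolated vertex, which is trivially simplicial), pick any edge $uv$ contained in $C$. By edge simpliciality, $uv$ is contained in some simplicial clique $C'$. I would then argue that $C' = C$: a simplicial clique is in particular a maximal clique, and by Lemma~\ref{lem:maximal-cliques-diamond-free} the edge $uv$ lies in a unique maximal clique, so both $C$ and $C'$, being maximal cliques containing $uv$, must coincide. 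Hence $C = C'$ is simplicial, and since $C$ was arbitrary, $G$ is clique simplicial.

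The main subtlety to handle carefully is the claim that a simplicial clique is automatically a maximal clique, which I would justify using the characterization in Lemma~\ref{lem:simplicial-cliques} (or directly: if $C' = N[v]$ for a simplicial vertex $v$, then any clique properly containing $C'$ would contain a neighbor of $v$ outside $N[v]$, a contradiction). The only other point needing a brief word is the degenerate case of a maximal clique consisting of a single isolated vertex, for which no edge exists to apply edge simpliciality; such a clique is simplicial by definition since the closed neighborhood of an isolated vertex is just the vertex itself. I do not anticipate any genuine obstacle here: the whole argument rests on the uniqueness of maximal cliques through each edge, which is exactly what the diamond-free structure provides, and the remaining steps are routine.
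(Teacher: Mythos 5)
Your proof is correct and follows exactly the route the paper intends: the paper states this fact without a detailed proof, justifying it by the observation (Lemma~\ref{lem:maximal-cliques-diamond-free}) that in a diamond-free graph every edge lies in a unique maximal clique, which is precisely the uniqueness argument you use, together with the standard facts that simplicial cliques are maximal and that the forward direction holds in all graphs. Your handling of the isolated-vertex case and the justification that $N[v]$ is a maximal clique are both sound, so nothing is missing.
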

\fi

Recognizing if a general graph $G = (V,E)$ is edge simplicial can be done in time $\O(|V|\cdot|E|)$ (see~\cite{MR2302164,MR727924}).
The algorithm is based on the observation that within a simplicial clique, every vertex of minimum degree is a simplicial vertex (see~\cite{MR992923}). We show that in the case of diamond-free graphs, the running time can be improved to $\O(|V|+|E|)$.

\ifarxiv
To describe the algorithm, we need to introduce some terminology.
\fi
Given a graph $G$ and a linear ordering $\sigma=(v_1,\ldots, v_n)$ of its vertices, the $\sigma$\textit{-greedy stable set} is the stable set $S$ in $G$ computed by repeatedly adding to the initially empty set $S$ the smallest $\sigma$-indexed vertex as long as the resulting set is still stable. A \textit{degree-greedy stable set} is any $\sigma$-greedy stable set where $\sigma=(v_1,\ldots, v_n)$ satisfies $d(v_i)\leq d(v_j)$ for $i<j$. Note that a degree-greedy stable set does not need to coincide with a stable set computed by the greedy algorithm that iteratively selects a minimum degree vertex and deletes the vertex and all its neighbors: to compute a degree-greedy stable set, the vertex degrees are only considered in the original graph $G$, and not in the subgraphs obtained by deleting the already selected vertices and their neighbors.

Our first lemma analyzes the structure of a degree-greedy stable set in a graph in which the simplicial cliques cover all the vertices. In particular, it applies to edge simplicial graphs.

\begin{restatable}[]{lemma}{lemS}
\label{lem:S}
Let $G$ be a graph in which every vertex belongs to a simplicial clique and let $S$ be a degree-greedy stable set in $G$.
Then, $S$ consists of simplicial vertices only, one from each simplicial clique.
\end{restatable}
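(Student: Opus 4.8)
The plan is to combine a local property of simplicial cliques with the way the degree-monotone ordering interacts with the greedy selection. First I would record the fact implicit in the algorithm's description: inside a simplicial clique $Q$, a vertex $w$ has minimum degree (over $Q$) if and only if $w$ is simplicial with $N[w]=Q$. Indeed, if $Q=N[v]$ for a simplicial vertex $v$, then every $w\in Q$ satisfies $Q\subseteq N[w]$ because $Q$ is a clique, so $d(w)\ge |Q|-1=d(v)$; equality forces $N[w]=Q$. Hence the minimum degree attained on $Q$ is exactly $|Q|-1$, and it is attained precisely by the simplicial vertices that generate $Q$.

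Next I would observe that a $\sigma$-greedy stable set is always inclusion-maximal, and hence dominating, regardless of $\sigma$: any vertex left out had a neighbor already selected, and $S$ only grows, so every vertex outside $S$ has a neighbor in the final $S$. From this I get, for each simplicial clique $Q=N[v]$, that $S\cap Q\neq\emptyset$ (domination applied to $v$ gives $S\cap N[v]\neq\emptyset$), while $|S\cap Q|\le 1$ since $Q$ is a clique and $S$ is stable. Thus $|S\cap Q|=1$ for every simplicial clique $Q$. Because the simplicial cliques cover $V(G)$ by hypothesis, this already settles the ``one from each simplicial clique'' half of the statement, and it guarantees that every $u\in S$ lies in at least one simplicial clique.

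The heart of the argument is to show that each $u\in S$ is itself simplicial, and this is where the degree-greedy order enters. I would fix $u\in S$ and a simplicial clique $Q$ with $u\in Q$; by the previous paragraph $u$ is the unique element of $S\cap Q$. Suppose $u$ is not simplicial. Since $u\in Q$ we have $Q\subseteq N[u]$, but $N[u]$ is not a clique, so $N[u]\supsetneq Q$ and $d(u)>|Q|-1$. Let $w$ be a minimum-degree vertex of $Q$; by the first observation $w$ is simplicial, $N[w]=Q$, and $d(w)=|Q|-1<d(u)$, so $w\neq u$ and $w$ strictly precedes $u$ in $\sigma$. Now examine the moment the greedy process considers $w$: all neighbors of $w$ lie in $Q\setminus\{w\}$, and the only member of $S\cap Q$ is $u$, which is processed later and so is not yet in $S$. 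Hence either no neighbor of $w$ is in $S$ at that moment, forcing $w$ into $S$ and producing a second element of $S\cap Q$; or some $q\in Q\setminus\{w\}$ was already selected, in which case $q\in S\cap Q$ with $q\neq u$. In both cases $|S\cap Q|\ge 2$, contradicting $|S\cap Q|=1$, so $u$ must be simplicial.

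The main obstacle is the bookkeeping in this last step: one must argue carefully that ``$u$ is processed after $w$'' together with ``$S$ never loses elements'' guarantees $u\notin S$ when $w$ is examined, so that $u$ cannot be the neighbor blocking $w$. The strict inequality $d(w)<d(u)$, which rests on $u$ being non-simplicial and on $N[w]=Q$, is exactly what forces the ordering to cooperate; an ordering that is merely degree-monotone (allowing ties) is enough here precisely because the inequality we need is strict.
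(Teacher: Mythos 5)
Your proof is correct, and it rests on the same two pillars as the paper's: (i) within a simplicial clique $Q$, the simplicial generators are exactly the vertices of minimum degree $|Q|-1$, every other vertex of $Q$ having strictly larger degree, and (ii) the degree-monotone ordering therefore places simplicial vertices of $Q$ before all non-simplicial ones. Where you diverge is in the logical assembly. The paper argues directly: the least-indexed simplicial vertex $v_i$ of each simplicial clique $C$ has all of its neighbors later in $\sigma$ (simplicial neighbors of $v_i$ come later by the choice of $v_i$, non-simplicial ones by the strict degree gap), so $v_i$ is selected into $S$; this one positive statement yields both conclusions at once, since $S$ can meet the clique $C$ in at most one vertex. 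You instead split the statement in two: you get ``one vertex from each simplicial clique'' from the general fact that any greedy stable set is inclusion-maximal, hence dominating --- an argument that needs no degree assumption and holds for every maximal stable set --- and you then prove simpliciality by contradiction, replaying the moment the process reaches the minimum-degree vertex $w$ of $Q$ and observing that either branch plants a second $S$-vertex in $Q$. Your route is slightly longer, but it has the merit of making explicit the degree-strictness fact (your first observation) that the paper uses silently: the paper's claim that all neighbors of $v_i$ have larger indices is only justified because non-simplicial vertices of $C$ have strictly larger degree than $v_i$, which is precisely the content of your opening lemma.
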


%
\begin{proof}
Let $\sigma=(v_1,\ldots, v_n)$ be a linear ordering of $V(G)$ with  $d(v_i)\leq d(v_j)$ if $i<j$ and such that $S$ is the $\sigma$-greedy stable set. Let $C$ be a simplicial clique in $G$ and let $v_i$ be the simplicial vertex in $C$ with the smallest index.
Then, all neighbors of $v_i$ have indices larger than $i$ and therefore $v_i$ is selected to be in $S$.
This shows that $S$ contains a vertex from $C$. Clearly, $S$ cannot contain two vertices from $C$ since $S$
is stable and $C$ is a clique. Thus, $S$ contains exactly one vertex from every simplicial clique in $G$.
Finally, suppose that $S$ contains some non-simplicial vertex $v$. By assumption on $G$, vertex $v$ belongs to some simplicial clique $C$. Let $w$ be the vertex from $C$ contained in $S$. Then $w$ is simplicial and thus $w\neq v$. This means that $S$ contains two adjacent vertices $v$ and $w$, which is in contradiction with the fact that $S$ is a stable set. Thus, every vertex in $S$ is simplicial.
\qed\end{proof}

Using Lemma~\ref{lem:S} we now show the importance of degree-greedy stable sets for testing if a given diamond-free graph is edge simplicial. The characterization is based on the following auxiliary construction. Given a graph $G=(V,E)$ and a stable set $S$ in $G$, we define the multigraph $G_S$ where $V(G_S)=V\setminus S$ and the multiset of edges is given by $$E(G_S)=\bigcup_{v\in S}\{xy\mid x\neq y \text{ and } x,y \in N_G(v)\}\,.$$
Note that since $S$ is a stable set in $G$, every edge in $E(G_S)$ indeed has both endpoints in $V\setminus S = V(G_S)$.

\begin{restatable}[]{lemma}{lemmaGSGS}
\label{lemma:G-S=G_S}
Let $S$ be a degree-greedy stable set in a diamond-free graph $G$. Then $G$ is edge simplicial if and only if $G_S=G-S$.
\end{restatable}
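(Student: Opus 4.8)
The plan is to prove both implications by translating the identity $G_S = G-S$ into two purely local statements about $S$: first, that every vertex of $S$ is simplicial, and second, that every edge of $G$ with both endpoints outside $S$ has a common neighbour in $S$. Once this dictionary is in place, edge-simpliciality is almost automatic, because $N_G[v]$ is by definition a simplicial clique whenever $v\in S$ is simplicial, so any edge sitting inside some $N_G[v]$ is covered.

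For the forward direction I would assume $G$ is edge simplicial and first invoke Fact~\ref{obs:cs-es} to upgrade this to clique simplicial, so that the simplicial cliques coincide with the maximal cliques; then Lemma~\ref{lem:S} tells me that the degree-greedy stable set $S$ contains exactly one vertex $s_C$ from each maximal clique $C$, each $s_C$ being simplicial. Since $s_C$ is a simplicial vertex lying in the maximal clique $C$, its closed neighbourhood $N_G[s_C]$ is a clique containing $C$, so maximality forces $N_G[s_C]=C$. I would then verify the two edge-set inclusions. For $E(G_S)\subseteq E(G-S)$: any edge produced by some $v\in S$ joins two neighbours of the simplicial vertex $v$, hence is a genuine edge of $G$ with both ends outside the stable set $S$. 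For $E(G-S)\subseteq E(G_S)$: an edge $xy$ with $x,y\notin S$ lies in a unique maximal clique $C$ by Lemma~\ref{lem:maximal-cliques-diamond-free}, and since $x,y\in C=N_G[s_C]$ with $x,y\neq s_C$, the vertex $s_C$ produces $xy$ in $G_S$.

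It remains to rule out multiplicities, i.e.\ to show $G_S$ is simple, and this is where the diamond-free hypothesis does the real work: if an edge $xy$ were produced by two distinct $v,v'\in S$, then $\{v,x,y\}$ and $\{v',x,y\}$ would both be triangles, hence both contained in the unique maximal clique on the edge $xy$, forcing $v,v'$ into that clique and into $S$ — contradicting that $S$ meets each maximal clique exactly once (equivalently, $\{v,v',x,y\}$ would induce a diamond). This closes the forward direction. The backward direction is the easy half: from $G_S=G-S$ I would read off that each $v\in S$ is simplicial (every pair of its neighbours must already be an edge of $G$) and that every edge with both ends outside $S$ is produced by some $v\in S$; then every edge of $G$ — whether or not it meets $S$ — is contained in the simplicial clique $N_G[v]$ for a suitable $v\in S$, so $G$ is edge simplicial.

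The main obstacle is the forward direction, and within it the simplicity of $G_S$. The two edge-set inclusions are essentially bookkeeping once $S$ has been pinned down by Lemma~\ref{lem:S}, and the backward direction needs nothing beyond the two local conditions; but eliminating multi-edges is exactly the point where the diamond-free assumption (each edge in a unique maximal clique) is indispensable, and I expect that to be the crux worth isolating cleanly.
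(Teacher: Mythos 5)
Your proof is correct, and its forward direction is essentially the paper's: both invoke Lemma~\ref{lem:S} to pin $S$ down as one simplicial vertex per simplicial clique, verify the two edge-set inclusions, and rule out duplicate edges in $G_S$ with the same diamond argument on $\{v,v',x,y\}$ (your detour through Fact~\ref{obs:cs-es} and the identity $N_G[s_C]=C$ is a repackaging of the paper's remark that, by diamond-freeness, each edge lies in exactly one simplicial clique). Where you genuinely diverge is the backward direction. The paper proves it by contraposition: assuming $G$ is not edge simplicial, it picks an edge $xy$ lying in no simplicial clique and asserts that ``every vertex in $S$ is simplicial in $G$'' to conclude $x,y\notin S$ and that no $s\in S$ sees both $x$ and $y$. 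But that assertion is not justified in that setting: Lemma~\ref{lem:S} requires every vertex of $G$ to lie in a simplicial clique, a hypothesis unavailable once edge-simpliciality has been denied (in $C_5$, for instance, a degree-greedy stable set necessarily contains non-simplicial vertices). The paper's argument can be patched by a case split --- if some $v\in S$ is non-simplicial, two non-adjacent neighbours of $v$ produce an edge of $G_S$ that is not an edge of $G-S$ --- but your direct route makes the patch unnecessary: from $G_S=G-S$ you first read off that every $v\in S$ is simplicial (any two neighbours of $v$ span an edge of $G_S=G-S$, hence of $G$), and only then cover every edge of $G$ by a simplicial clique $N_G[v]$ with $v\in S$. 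So your treatment of this direction is not just different in style (direct rather than contrapositive) but actually tighter than the paper's own write-up.
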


\begin{proof}
Suppose $G$ is edge simplicial. Each edge is therefore part of at least one simplicial clique and, since $G$ is diamond-free, each edge is in fact in exactly one simplicial clique. By Lemma~\ref{lem:S}, $S$ consists of simplicial vertices only, one from each simplicial clique. We need to show that the multigraphs $G_S$ and $G-S$ are equal.
The vertex sets $V(G_S)$ and $V(G-S)$ are equal by definition.
Since $G$ is a simple graph, so is $G-S$. Suppose that $G_S$ has a duplicate edge $xy$. Then there must exist two distinct vertices $v_1,v_2 \in S$ such that $\{v_1x,v_1y,v_2x,v_2y\} \subseteq E(G)$. Since $S$ is a stable set we have $v_1v_2\not\in E(G)$ and since $v_1$ is a simplicial vertex in $G$, vertices $x$ and $y$ are adjacent in $G$. But now $G$ contains an induced diamond on the vertices
$\{v_1,v_2,x,y\}$, a contradiction. Thus, $G_S$ is a simple graph.
Consider an edge $xy \in E(G-S)$. Since $G$ is edge simplicial, Lemma~\ref{lem:S} implies the existence of a vertex $v\in S$ such that $\{x,y,v\}$ is a clique in $G$, implying that $x,y\in N_G(v)$ and thus $xy\in E(G_S)$. Therefore $E(G-S)\subseteq E(G_S)$. Consider now an edge $xy\in E(G_S)$. By the definition of $G_S$, there exists a vertex $v\in S$ such that $\{x,y\}\subseteq N_G(v)$. Since $v\in S$, vertex $v$ is simplicial in $G$ and hence $xy\in E(G-S)$. Therefore $E(G-S)\supseteq E(G_S)$ and $G-S=G_S$.

Now we show the reverse implication by contraposition. Suppose that $G$ is not edge simplicial. Then there exists an edge $xy\in E(G)$ that is not contained in any simplicial clique of $G$. In particular, since every vertex in $S$ is simplicial in $G$, vertices $x$ and $y$ cannot belong to $S$. Thus $xy\in E(G-S)$. Furthermore, there does not exist a vertex $s \in S$ such that $x,y \in N_G(s)$. Hence the edge $xy$ is not in $E(G_S)$ but it is in $E(G-S)$. It follows that $G_S\neq G-S$.
\qed\end{proof}

The condition given by Lemma~\ref{lemma:G-S=G_S} can be tested in linear time%
\ifarxiv
.
\else
, see Algorithm~\ref{alg:DFESR2}. Together with Theorem~\ref{thm:main}, this leads to the following.

\thmRecognition*
\fi

\ifarxiv
\begin{restatable}[]{proposition}{propdiamondfreerecognition}
\label{prop:diamond-free-recognition2}
Algorithm~\ref{alg:DFESR2} runs in time $\mathcal{O}(|V|+|E|)$ and correctly determines whether a given diamond-free graph $G=(V,E)$ is edge simplicial.
\end{restatable}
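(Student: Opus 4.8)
The goal is to show that Algorithm~\ref{alg:DFESR2} runs in time $\O(|V|+|E|)$ and correctly decides edge simpliciality of a diamond-free graph. The plan is to realize the test provided by Lemma~\ref{lemma:G-S=G_S}, namely $G_S = G-S$ for a degree-greedy stable set $S$, using only linear-time primitives. First I would compute a degree-greedy stable set $S$: sort the vertices by nondecreasing degree in linear time using counting (bucket) sort, since all degrees lie in $\{0,\ldots,|V|-1\}$, obtaining an ordering $\sigma$; then perform a single greedy pass that marks each vertex as selected or forbidden, adding $v_i$ to $S$ exactly when it is not yet forbidden and forbidding all its neighbors upon selection. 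Each vertex and each edge is touched a constant number of times, so this phase is $\O(|V|+|E|)$.

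Next I would construct the multigraph $G_S$ and compare it with $G-S$. Building $G_S$ amounts to, for every $v\in S$, generating an edge $xy$ for each pair of distinct neighbors $x,y$ of $v$; naively this is quadratic in the neighborhood size, so the key efficiency observation is that because $S$ consists of simplicial vertices in the edge-simplicial case, each $N_G(v)$ with $v\in S$ is a clique, and the total edge output is bounded by $\sum_{v\in S}\binom{|N_G(v)|}{2}$, which need not be linear in general. Therefore, rather than materializing all pairs, I would compare $G_S$ against $G-S$ incrementally: the reverse implication in Lemma~\ref{lemma:G-S=G_S} tells us that $G$ fails to be edge simplicial as soon as $G_S$ either misses an edge of $G-S$ or contains a parallel (duplicate) edge. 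The crucial structural fact I would exploit is that in a diamond-free graph each edge lies in a unique maximal clique (Lemma~\ref{lem:maximal-cliques-diamond-free}), so each edge of $G-S$ should be generated by at most one $v\in S$; this lets me detect a duplicate edge the moment a second generating vertex appears, and it bounds the honest work. Concretely, I would process each $v\in S$, and for its neighborhood only attempt to certify that $N_G(v)$ is a clique witnessing the edges incident to it, aborting with answer \emph{no} at the first violation — either a non-adjacency inside $N_G(v)$ (yielding a non-simplicial vertex, hence a missing $G_S$ edge or a diamond) or an edge already claimed by another vertex of $S$.

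To make the comparison run in linear time I would use Lemma~\ref{lemma:list-sort} to sort all adjacency lists consistently with a fixed vertex ordering in time $\O(|V|+|E|)$, so that membership and adjacency checks between a vertex and a neighbor's list can be performed by merging sorted lists rather than by repeated searches. For each $v\in S$ I would walk its sorted neighbor list and, for each neighbor $x$, verify in a merge-style scan that the edges $G_S$ attributes to $x$ via $v$ are exactly the edges of $G-S$ between $x$ and the other neighbors of $v$, charging the work to the edges examined. A global marker array, reset lazily, records which edges of $G-S$ have already been covered, so that an attempt to cover an edge twice immediately reveals a parallel edge in $G_S$ and an uncovered edge remaining at the end reveals a missing edge; either way Lemma~\ref{lemma:G-S=G_S} certifies the answer.

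The main obstacle is controlling the running time of constructing or traversing $G_S$, whose number of edges $\sum_{v\in S}\binom{d(v)}{2}$ is superlinear in the worst case. The resolution is that this blow-up can only occur when some $N_G(v)$ with $v\in S$ is large but \emph{not} a clique, and in that situation $G$ is already either not edge simplicial or contains a diamond; since $G$ is assumed diamond-free, detecting the first non-adjacency inside a neighborhood lets the algorithm abort early with the correct answer \emph{no}, before performing superlinear work. Thus the careful argument is to show that the total work done before any early abort, plus the work in the all-clique case where $G_S=G-S$ genuinely holds, is $\O(|V|+|E|)$; this uses the uniqueness of the maximal clique containing each edge (Lemma~\ref{lem:maximal-cliques-diamond-free}) to guarantee that in the affirmative case each edge of $G-S$ is examined a bounded number of times, so the edge-generation cost telescopes to $\O(|E|)$.
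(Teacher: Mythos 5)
Your correctness argument is sound and follows the same route as the paper: compute a degree-greedy stable set $S$, then decide via the criterion $G_S = G-S$ of Lemma~\ref{lemma:G-S=G_S}, using Lemma~\ref{lem:S} to justify the early ``no'' answers. The genuine gap is in the running-time analysis of the comparison step. You verify, for each $v \in S$ and each neighbor $x$ of $v$, by a merge-style scan of the sorted lists $N_G(v)$ and $N_{G-S}(x)$, that the pairs generated by $v$ at $x$ are edges of $G-S$, ``charging the work to the edges examined.'' That charging is invalid: an entry of the list $N_{G-S}(x)$ is examined once for every vertex of $S\cap N(x)$, and this multiplicity can be $\Theta(|V|)$. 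Concretely, take the friendship graph ($k$ triangles sharing a single vertex $x$): it is diamond-free and edge simplicial, every $N_G(v)$ with $v\in S$ is a clique of size $2$, and $\sum_{v\in S}\binom{d(v)}{2}=k$, yet your scans repeatedly walk the list $N_{G-S}(x)$ of length $k$, once for each of the $k$ vertices of $S$ adjacent to $x$, for total time $\Theta(k^2)=\Theta(|E|^2)$, with no abort ever triggered since this is a \emph{yes}-instance. Note that your ``resolution'' paragraph only bounds the number of generated pairs (via early abort when some $N_G(v)$ fails to be a clique); it does not bound the cost of the adjacency verifications themselves, which is where the blow-up occurs. The underlying difficulty is that constant-time adjacency queries are not available in the adjacency-list model, and per-pair merge scans are not a constant-cost substitute.

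The paper's proof avoids this issue by a different device, which is the one missing idea here. Before building $G_S$, it checks whether $\sum_{v\in S}\binom{d(v)+1}{2} > |E|$ and, if so, answers ``No'' immediately; this is correct because when $G$ is edge simplicial the simplicial cliques $N[v]$, $v\in S$, are pairwise edge-disjoint (by Lemma~\ref{lem:S}, diamond-freeness, and Lemma~\ref{lem:maximal-cliques-diamond-free}), so that sum counts distinct edges and cannot exceed $|E|$. Once this guard has passed, materializing \emph{all} of $G_S$ costs only $\O(|E|)$, and the test $G_S = G-S$ is performed without any adjacency queries: first compare list lengths, then sort the adjacency lists of both graphs with Lemma~\ref{lemma:list-sort} and compare them entry by entry. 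To salvage your incremental scheme you would need either this global guard, or a per-vertex batching of the generated pairs (essentially the lists $L'_w$ of Algorithm~\ref{alg:DFESR2}) followed by a sort-and-compare, rather than per-pair merge scans.
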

\fi

\begin{algorithm}[!h]
\caption{Diamond-Free Edge Simplicial Recognition}\label{alg:DFESR2}
 \hspace*{\algorithmicindent} \textbf{Input:} A diamond-free graph $G$ given by adjacency lists $(L_v:v\in V(G))$.\\
 \hspace*{\algorithmicindent} \textbf{Output:} ``Yes'' if $G$ is edge simplicial, ``No'' otherwise.
\begin{algorithmic}[1]
 \State compute a linear order $\sigma = (v_1,\ldots, v_n)$ of the vertices of $G$ such that \hbox{ $d(v_1)\leq \ldots \leq d(v_n)$};
 \State sort the adjacency list of each vertex in increasing order with respect to $\sigma$;
 \State set $S = \emptyset$, all the vertices are unmarked;
 \For{$i=1,\ldots, n$}
     \If {$v_i$ is not marked}
         \State mark all vertices in $N(v_i)$, $S=S\cup\{v_i\}$;
     \EndIf
 \EndFor
 \State compute the adjacency lists $L^-_w$ of $G-S$ based on the order of $V\setminus S$ induced by $\sigma$;
 \If {$\sum_{v \in S}{d(v)+1 \choose 2}>|E(G)|$}
     \State \Return ``No'';
 \EndIf
 \For{$w \in V\setminus S$}
     \State $L'_w=\text{empty list}$;
 \EndFor
 \For{$v \in S$}
     \For{$w \in N_G(v)$}
         \State append each element of $N_G(v)\setminus\{w\}$ at the end of $L'_w$;
     \EndFor
 \EndFor
 \For{$w \in V\setminus S$}
 		\If{$\text{length}(L'_w)\neq \text{length}(L^-_w)$}
 				\State \Return  ``No'';
 		\EndIf		
 \EndFor

 \State sort the adjacency lists $L'_w$ based on the linear order of $V\setminus S$ induced by $\sigma$;
 \For{$w \in V\setminus S$}
		 \If{$L'_w\neq L^-_w$}
 				\State \Return  ``No'';
		 \EndIf
 \EndFor
 \State \Return ``Yes'';
 \end{algorithmic}
 \end{algorithm}
\ifarxiv

\begin{proof}
Let us first describe the algorithm informally. In lines 1--6, Algorithm~\ref{alg:DFESR2} computes a degree-greedy stable set $S$ of $G$.
In line 7, it computes the adjacency lists $L_w^-$ of the graph $G-S$.
In lines 8--9 it checks that a necessary condition for $G$ to be edge simplicial holds. (This check is only needed to achieve a linear running time.) In lines 10--14, the algorithm computes the adjacency lists $L_w'$ of the multigraph $G_S$.
Finally, in lines 15--21, the algorithm tests if $G_S = G-S$.

\begin{sloppypar}
We prove that the algorithm is correct by analyzing each of the possible outcomes.
Suppose first that the algorithm returned ``No'' at line 9. Then $\sum_{v\in S}{d(v)+1 \choose 2} > |E(G)|$. Suppose for a contradiction that $G$ is edge simplicial. By Lemma~\ref{lem:S}, $S$ consists of simplicial vertices only, one from each simplicial clique. Since $G$ is diamond-free, Lemma~\ref{lem:maximal-cliques-diamond-free} implies that every edge is in a unique maximal clique. Thus, for two different vertices $v,w\in S$, the cliques $N[v]$ and $N[w]$ are edge-disjoint. In particular, this implies that the value of $\sum_{v \in S}{d(v)+1 \choose 2}$, which counts the total number of edges
in these simplicial cliques, cannot exceed $|E(G)|$.
Hence, if $\sum_{v \in S}{d(v)+1 \choose 2} > |E(G)|$, then $G$ is indeed not edge simplicial.
Suppose next that the algorithm returned ``No'' at line 17 or 21. Then, the graphs $G_S$ and $G-S$ are not the same and by Lemma~\ref{lemma:G-S=G_S}, we can say that $G$ is not edge simplicial.
Finally, if the algorithm returns ``Yes'', then $G_S = G-S$ and thus $G$ is edge simplicial
by Lemma~\ref{lemma:G-S=G_S}.
\end{sloppypar}

Next we analyze the time complexity of the algorithm. We denote by $m$ the number of edges of $G$. Sorting the vertices with respect to their degrees in line 1 can be done in time $\O(n+m)$ using counting sort.
A further sorting of the adjacency lists in line 2 can be done in $\O(n+m)$ using Lemma \ref{lemma:list-sort}.
Line 3 takes $\O(n)$ time.
Lines 4--6 take altogether $\O(n+m)$ time. Line 7 takes $\O(n+m)$ time.
Lines 8--9 take $\O(|S|) = \O(n)$ time.
Lines 10--11 take $\O(n)$ time.
Lines 12--14 take time proportional to $\sum_{v\in S}(d(v)(d(v)-1)) \le 2\cdot \sum_{v\in S}{d(v)+1\choose 2}$, which is in $\O(m)$ since otherwise the algorithm would have returned ``No'' at line 9.
Lines 15--17 take $\O(n)$ time.
Since the algorithm performs line 18 only if
$\text{length}(L'_w) = \text{length}(L^-_w)$ for all $w\in V\setminus S$,
sorting of the adjacency lists in line 18 can be done using Lemma \ref{lemma:list-sort}
in time proportional to
$\O(n+\sum_{w\in V\setminus S}|L'_w|) =
\O(n+\sum_{w\in V\setminus S}|L^-_w|) = \O(n+m)$.
Similarly, the time needed to perform a check in line 20 is bounded by $\O(d_{G-S}(w)) = \O(d_G(w))$ for each iteration,
hence the total time needed for lines 19--21 is again $\O(n+m)$. Line 22 takes $\O(1)$ time.
We conclude that altogether, the algorithm runs in linear time.
\qed\end{proof}

\fi
%

\ifarxiv
Theorem~\ref{thm:main}, Fact~\ref{obs:cs-es}, and
Proposition~\ref{prop:diamond-free-recognition2}
imply the following.

\thmRecognition*
\fi


\begin{proof}
Let $G=(V,E)$ be a diamond-free graph. Since a graph is CIS if and only if all of its components are CIS (see, e.g.,~\cite{e6c81804d9c9450c946b6f3cece46881}),
we may assume that $G$ is connected.

By Theorem~\ref{thm:main}, it suffices to check if $G$ is isomorphic to either $K_{m,n}$ for some $m,n\ge 2$, or $L(K_{n,n})$ for some $n\ge 3$, or if $G$ is clique simplicial. We can check if $G$ is a complete bipartite graph in linear time as follows. We run breadth-first search from any fixed vertex $v$ to test if $G$ is bipartite and to compute the sets $X$ and $Y$ of vertices at distance one and two from $v$, respectively. Assuming $G$ is bipartite, we only need to check if $|V| = 1+|X|+|Y|$, and verify that the vertex degrees are what they should be (namely, $d_G(x) = |Y|+1$ for all $x\in X$ and $d_G(y) = |X|$ for all $y\in Y$).

To check if $G$ is isomorphic to the line graph of $K_{n,n}$ for some $n\ge 3$, we can first check in linear time whether $G$ is a line graph, using the algorithm of Roussopoulos~\cite{MR0424435}  or Lehot~\cite{MR0347690}. If $G$ is indeed a line graph, the algorithm will also compute a graph $H$ such that $G = L(H)$. As shown by Whitney~\cite{MR1506881}, if such a graph $H$ exists, then $H$ is unique, provided that $G$ is connected and has at least four vertices (which we may assume). It remains to check if $H\cong K_{n,n}$ for some $n\ge 3$, which can also be done in linear time, as explained above.

Finally, to check if $G$ is clique simplicial in linear time, we use Proposition~\ref{prop:diamond-free-recognition2} and Fact~\ref{obs:cs-es}.
\qed\end{proof}

A graph $G=(V,E)$ is \emph{general partition} if there exists a set $U$ and an assignment of vertices $x\in V$ to sets $U_x\subseteq U$ such that $xy\in E$ if and only if $U_x\cap U_y \neq\emptyset$ and for each maximal stable set $S$ in $G$, the sets $U_x$, $x\in S$ form a partition of $U$.
It is known that $G$ is a general partition graph if and only if every edge of $G$ is contained in a strong clique (see~\cite{MR1212874}).
Clearly, every CIS graph is a general partition graph, and the two properties are equivalent in the class of diamond-free graphs.
Therefore, Theorem~\ref{thm:diamond-free-recognition} implies the following.

\begin{corollary}
There is a linear-time algorithm that determines whether every edge of a given diamond-free graph $G$ is in a strong clique.
\end{corollary}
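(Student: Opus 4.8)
The plan is to reduce the question entirely to CIS recognition, for which Theorem~\ref{thm:diamond-free-recognition} already supplies a linear-time algorithm. The whole corollary then follows from a chain of equivalences, most of which is already assembled in the paragraph preceding the statement: by the cited result of~\cite{MR1212874}, ``every edge lies in a strong clique'' is the same as ``$G$ is a general partition graph,'' and the claim is that for diamond-free graphs this coincides with being CIS. So I would first state the target equivalence explicitly: for a diamond-free graph $G$, every edge of $G$ is contained in a strong clique if and only if $G$ is CIS.

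The substantive step is to justify that last equivalence in the diamond-free setting. The forward implication—every CIS graph has every edge in a strong clique—requires no restriction at all, since in a CIS graph every maximal clique is strong and every edge belongs to some maximal clique. For the converse I would lean on Lemma~\ref{lem:maximal-cliques-diamond-free}: in a diamond-free graph every edge lies in a \emph{unique} maximal clique. Assuming every edge of $G$ lies in a strong clique, I would first observe that any clique containing a strong clique is itself strong (it meets every maximal stable set a fortiori), so every edge lies in a strong \emph{maximal} clique; by uniqueness, the one maximal clique through each edge is strong. This shows every maximal clique of size at least two is strong.

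The remaining maximal cliques are the isolated vertices, and here is the one boundary case I expect to be the only genuine obstacle: I must argue these are strong as well. This is immediate, since an isolated vertex has no neighbors and hence lies in every maximal stable set, so the singleton clique meets all of them. With this handled, every maximal clique of $G$ is strong, i.e.\ $G$ is CIS, completing the converse.

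Finally, combining the two implications gives, for diamond-free $G$, the desired equivalence between ``every edge lies in a strong clique'' and ``$G$ is CIS.'' I would then conclude by invoking Theorem~\ref{thm:diamond-free-recognition}: running the linear-time CIS recognition algorithm decides the former property in time $\mathcal{O}(|V|+|E|)$. Apart from the singleton-clique case noted above, every step is either a direct appeal to a stated result or a short one-line argument, so I do not anticipate any computational difficulty.
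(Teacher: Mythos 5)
Your proposal is correct and follows essentially the same route as the paper: the paper also reduces the question to CIS recognition via the observation that, for diamond-free graphs, ``every edge lies in a strong clique'' (equivalently, being a general partition graph) coincides with being CIS, and then invokes Theorem~\ref{thm:diamond-free-recognition}. The only difference is that you spell out the details the paper leaves implicit—the superset-of-a-strong-clique argument, the use of Lemma~\ref{lem:maximal-cliques-diamond-free} for uniqueness, and the isolated-vertex case—all of which are handled correctly.
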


\subsection{Testing the CIS property in classes of $F$-free graphs}\label{sec:F-free}

\begin{sloppypar}
No good characterization or recognition algorithm for CIS graphs is known. Recognizing CIS graphs is believed to be {\sf co-NP}-complete~\cite{MR1344757}, conjectured to be {\sf co-NP}-complete~\cite{MR2234986}, and conjectured to be polynomial~\cite{e6c81804d9c9450c946b6f3cece46881}. Using Theorem~\ref{thm:diamond-free-recognition} together with some known results from the literature (on strong cliques, resp.~CIS graphs~\cite{HMR2018,alcon2019characterization,e6c81804d9c9450c946b6f3cece46881} along with~\cite{alekseev1991number,MR984569,MR1405872,MR476582}) implies
that the CIS property can be recognized in polynomial time in any class of $F$-free graphs where $F$ has at most $4$ vertices.
\end{sloppypar}

\corFfree*

\begin{sloppypar}

\begin{proof}
Let $F$ be a graph with at most $4$ vertices. Clearly, it suffices to consider the case when $F$ has exactly four vertices. Furthermore,
since a graph is CIS if and only if its complement is CIS
, if there is a polynomial-time algorithm to determine whether a given $F$-free graph is CIS, then there is also a polynomial-time algorithm to determine whether a given $\overline{F}$-free graph is CIS.
Thus, since there are $11$ graphs with $4$ vertices, $5$ pairs of complementary graphs $\{F$, $\overline{F}\}$, and one self-complementary graph, the $P_4$, we consider $6$ cases.

First, consider the case when $F\in \{K_4$, $\overline{K_4}\}$. If $G$ is a $K_4$-free graph, then $G$ has $\O(|V(G)|^3)$ maximal cliques, and hence one can test if $G$ is CIS simply by enumerating all the maximal cliques and testing, for each of them, if it is strong. By Lemma~\ref{lem:strong-cliques}, a clique $C$ in a graph $G$ is strong if and only if there is no stable set $S \subseteq V(G)\setminus C$ such that $|S|\le |C|$ and every vertex in $C$ has a neighbor in $S$. Since we only need to check this for cliques of constant size, we obtain a polynomial-time algorithm.

Second, suppose that $F$ is the diamond or its complement. In this case, Theorem~\ref{thm:diamond-free-recognition} applies.

Third, suppose that $F$ is the $C_4$ or its complement. A clique in a $C_4$-free graph is strong if and only if it is simplicial (see~\cite{HMR2018}), and hence a $C_4$-free graph is CIS if and only if it is clique simplicial. Every $C_4$-free graph $G$ has at most $\O(|V(G)|^2)$ maximal cliques (see~\cite{alekseev1991number,MR984569,MR1405872}, and they can be enumerated in polynomial time (e.g., by applying the algorithm of Tsukiyama et al.~\cite{MR476582} to the complement of $G$). Thus, we can test
in polynomial time if a given $C_4$-free graph is CIS by enumerating its maximal cliques and checking if they are all simplicial.

Fourth, suppose that $F$ is the paw (that is, a graph obtained from $K_3$ by adding to it a new vertex of degree $1$) or its complement.
By Proposition~\ref{prop:CIS-properties}, in every CIS graph each induced $P_4$ is settled. Thus, if a CIS graph $P_4$,  $(a,b,c,d)$, then it also contains an induced paw, on the vertex set $\{a,b,c,v\}$, where $v$ is any vertex that settles the $P_4$ (that is, $v$ is adjacent to both $b$ and $c$ and non-adjacent to both $a$ and $d$). It follows that a paw-free graph is CIS if and only if it is $P_4$-free. A polynomial-time recognition algorithm to determine whether a given paw-free graph is CIS follows.

Fifth, suppose that $F$ is the claw (that is, the complete bipartite graph $K_{1,3}$) or its complement.
A polynomial-time algorithm for recognizing claw-free CIS graphs was given in~\cite{alcon2019characterization}.

Finally, suppose that $F$ is the $P_4$. In this case, the recognition algorithm is trivial, since every $P_4$-free graph is CIS (see, e.g.,\cite{e6c81804d9c9450c946b6f3cece46881}).
\qed\end{proof}
\end{sloppypar}

\section{Hardness results}\label{sec:hardness}

\begin{sloppypar}
We consider five more decision problems related to strong cliques: {\sc Strong Clique}, {\sc Strong Clique Existence}, {\sc Strong Clique Vertex Cover}, {\sc Strong Clique Partition}, and {\sc Strong Clique Partition Existence} (see Section~\ref{sec:intro} for definitions). These problems were studied by Hujdurovi\'c et al.~in~\cite{HUJDUROVIC20192738}, who determined the computational complexity of these problems in the classes of chordal graphs, weakly chordal graphs, line graphs and their complements, and graphs of maximum degree at most three.
\end{sloppypar}

\begin{sloppypar}
In contrast with the problems of verifying whether every maximal clique is strong, or whether every edge is in a strong clique, we prove that all the above five problems are {\sf co-NP}-hard in the class of diamond-free graphs.
The hardness proofs are obtained using a reduction from the {\sc $3$-Colorability} problem in the class of triangle-free graphs:
Given a triangle-free graph $G$, can $V(G)$ be partitioned into three stable sets?
As shown by Kami\'{n}ski and Lozin in~\cite{MR2291884}, this problem is {\sf NP}-complete.
Furthermore, it is clear that the problem remains {\sf NP}-complete if we additionally assume that the input graph has at least five vertices and minimum degree at least three. Let $\mathcal{G}$ denote the class of all triangle-free graphs with at least five vertices and minimum degree at least $3$.
\end{sloppypar}

\begin{theorem}\label{thm:3-col}
The {\sc $3$-Colorability} problem is {\sf NP}-complete in the class $\mathcal{G}$.
\end{theorem}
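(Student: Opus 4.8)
The plan is to prove membership in {\sf NP} and then {\sf NP}-hardness by a padding reduction from {\sc $3$-Colorability} restricted to triangle-free graphs, which is {\sf NP}-complete by the result of Kami\'{n}ski and Lozin~\cite{MR2291884}. Membership in {\sf NP} is immediate: a proper $3$-coloring is a certificate of size $\O(|V(G)|)$ that can be verified in linear time. For hardness, given an arbitrary triangle-free graph $G$ (an instance of the Kami\'{n}ski--Lozin problem), I would construct in polynomial time a graph $G'\in\mathcal{G}$ that is $3$-colorable if and only if $G$ is. Since $G$ is already triangle-free, the only properties of $\mathcal{G}$ that need to be enforced are the bound on the number of vertices and the minimum degree condition, so the reduction only has to \emph{pad} the low-degree vertices. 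Instances with fewer than five vertices can be decided directly in constant time, so I may assume $|V(G)|\ge 5$, whence $|V(G')|\ge 5$ automatically.

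To fix the degrees, for each vertex $v$ with $d_G(v)<3$ I would attach $3-d_G(v)$ fresh, pairwise-disjoint copies of $K_{3,3}$, joining $v$ by a single edge to one fixed vertex $a$ in each copy. This raises $d(v)$ to exactly $3$, leaves every other original degree unchanged, and gives every gadget vertex degree at least $3$; hence $G'$ has minimum degree at least $3$. The construction adds at most $18$ vertices and a constant number of edges per low-degree vertex, so $G'$ is built in linear time and the reduction is polynomial.

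There are two points to verify: that $G'$ stays triangle-free and that the reduction preserves $3$-colorability. Triangle-freeness is clear, since $G$ is triangle-free, each $K_{3,3}$ copy is bipartite, and a connecting edge $va$ cannot lie on a triangle because the only other neighbors of $a$ lie on the opposite side of its $K_{3,3}$, none of which is adjacent to $v$. For $3$-colorability, the forward direction restricts: $G$ is an induced subgraph of $G'$, so any proper $3$-coloring of $G'$ restricts to one of $G$. The reverse direction is where the gadget earns its keep. The key observation is that the attachment vertex $a$ is \emph{color-free}, meaning that for each of the three colors there is a proper $3$-coloring of its $K_{3,3}$ that assigns $a$ that color (color one side of the copy monochromatically with $a$'s color and the other side with any different color). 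Thus, starting from a proper $3$-coloring of $G$, for each attached copy I can pick a color of $a$ distinct from the color of $v$ and extend it over the whole copy, obtaining a proper $3$-coloring of $G'$.

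I do not expect any serious obstacle, as this is a standard padding argument; the only part requiring genuine care is isolating the color-freeness property of the attachment vertex, since it is precisely this flexibility that guarantees the added edges never obstruct a coloring. Everything else reduces to the routine checks above, and combined with membership in {\sf NP} they yield {\sf NP}-completeness in the class $\mathcal{G}$.
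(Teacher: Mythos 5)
Your proof is correct and follows the same route the paper takes: a reduction from {\sc $3$-Colorability} on triangle-free graphs (Kami\'{n}ski--Lozin), with the minimum-degree and vertex-count conditions of $\mathcal{G}$ enforced by padding. The paper simply asserts that this extra step is clear, whereas you make it explicit via the pendant $K_{3,3}$ gadget and verify triangle-freeness, the degree bounds, and the two-way preservation of $3$-colorability --- a valid instantiation of exactly the argument the paper has in mind.
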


The reductions are based on the following construction. Given a graph $G\in \mathcal{G}$, we associate to it two diamond-free graphs $G'$ and $G''$, defined as follows. The vertex set of $G'$ is $V(G)\times\{0,1,2,3\}$.
For every $v\in V(G)$, the set of vertices of $G'$ with value $v$ in the first coordinate forms a clique (of size $4$); we will refer to this clique as $C_v$. The set of vertices of $G'$ with value $0$ in the second coordinate forms a clique $C$ (of size $|V(G)|$).
For every $i\in \{1,2,3\}$ and every two distinct vertices $u,v\in V(G)$, vertices $(u,i)$ and $(v,i)$ are adjacent in $G'$ if and only if $u$ and $v$ are adjacent in $G$. There are no other edges in $G'$. The graph $G''$ is obtained from the graph $G'$ by adding, for each vertex $w\in V(G')\setminus C$, a new vertex $w'$ adjacent only to $w$. See Figure~\ref{fig:reduction} for an example.

\begin{figure}[h]
  \centering
  \includegraphics[width=\linewidth]{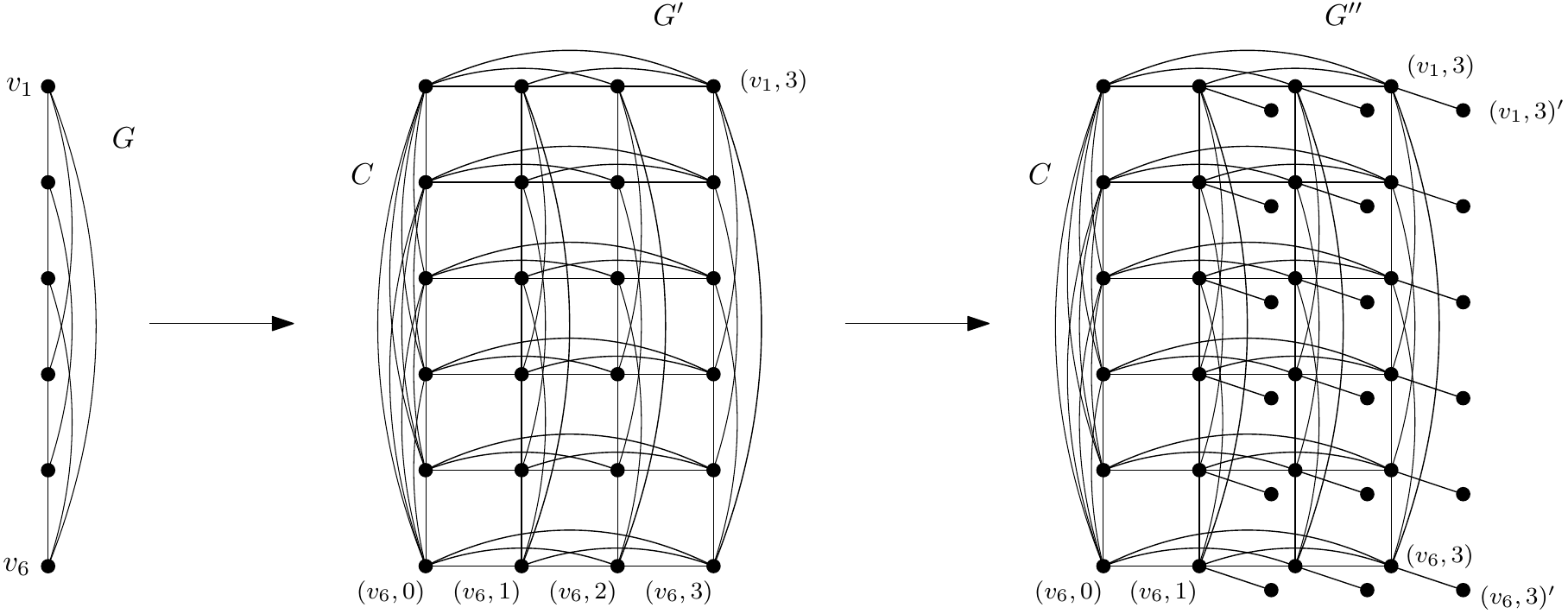}
  \caption{Transforming $G$ into $G'$ and $G''$.}
  \label{fig:reduction}
\end{figure}

\begin{restatable}[]{proposition}{propreductions}\label{prop:reductions}
Let $G\in \mathcal{G}$ and let $G'$ and $G''$ be the graphs constructed from $G$ as described above.
Then, $G'$ and $G''$ are diamond-free and the following statements are equivalent.
\begin{enumerate}
  \item $G$ is not $3$-colorable.
  \item $C$ is a strong clique in $G'$.
  \item $G'$ has a strong clique.
  \item $C$ is a strong clique in $G''$.
  \item Every vertex of $G''$ is contained in a strong clique.
  \item Every clique from the following collection of cliques in $G''$ is strong: $$\{C\}\cup \{\{w,w'\}\mid w\in V(G')\setminus C\}\,.$$
  \item The vertex set of $G''$ can be partitioned into strong cliques.
\end{enumerate}
\end{restatable}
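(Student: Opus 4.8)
The plan is to route all seven conditions through a single pivot statement: the clique $C$ is strong (in $G'$, and equivalently in $G''$) if and only if $G$ is \emph{not} $3$-colorable. The main tool throughout is Lemma~\ref{lem:strong-cliques}, which turns ``non-strong'' into ``dominated by a stable set''. I would first dispatch diamond-freeness using Lemma~\ref{lem:maximal-cliques-diamond-free}: it suffices to check that every edge of $G'$ lies in a unique maximal clique. Edges split into three kinds --- inside a vertical clique $C_v$, inside $C$, or a layer edge $(u,i)(v,i)$ with $u\sim v$ --- and in each case one checks that every common neighbor of the endpoints stays inside $C_v$, inside $C$, or (for layer edges, by triangle-freeness of $G$) does not exist, so the maximal clique is unique. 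For $G''$, each added vertex has degree one, hence lies in no diamond and is never a common neighbor of two distinct vertices, so diamond-freeness is inherited.

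The heart of the proof is the pivot. By Lemma~\ref{lem:strong-cliques}, $C$ is non-strong iff some stable set $S\subseteq V(G')\setminus C$ dominates $C$. The only neighbors of $(v,0)$ outside $C$ are $(v,1),(v,2),(v,3)$, so $S$ must contain at least one layer vertex over each $v$; setting $c(v)$ to such a layer yields a map $c:V(G)\to\{1,2,3\}$, and stability of $S$ within each layer copy (where $(u,i)\sim(v,i)$ exactly when $u\sim v$) forces $c$ to be a proper $3$-coloring. Conversely, a proper $3$-coloring $c$ gives the dominating stable set $\{(v,c(v)):v\in V(G)\}$. Thus $C$ is strong iff $G$ is not $3$-colorable, which is (1)$\Leftrightarrow$(2); the same computation, with pendants unable to dominate any $(v,0)$, gives (1)$\Leftrightarrow$(4).

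To close the remaining equivalences I would record four short applications of Lemma~\ref{lem:strong-cliques}: (i) each pendant edge $\{(v,i),(v,i)'\}$ is strong, since $(v,i)'$ has no neighbor outside the edge and so the edge cannot be dominated; (ii) each $C_v$ is non-strong, witnessed by $\{(w_0,0),(u_1,1),(u_2,2),(u_3,3)\}$ for three distinct neighbors $u_1,u_2,u_3$ of $v$ and a vertex $w_0\notin\{v,u_1,u_2,u_3\}$; (iii) each layer edge is non-strong; and (iv) each proper nonempty $C'\subsetneq C$ is dominated by a single vertex of $C\setminus C'$, hence non-strong. Since a superset of a strong clique is strong and the maximal cliques of $G'$ are exactly $C$, the $C_v$'s, and the layer edges, facts (ii)--(iii) together with (1)$\Leftrightarrow$(2) give (2)$\Leftrightarrow$(3). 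In $G''$, fact (i) makes every layer and pendant vertex lie in a strong clique while each $(v,0)$ lies only in $C$ or the non-strong $C_v$, yielding (4)$\Leftrightarrow$(5); fact (i) also directly gives (4)$\Leftrightarrow$(6). For (4)$\Leftrightarrow$(7), one direction uses the explicit partition $\{C\}\cup\{\{(v,i),(v,i)'\}\}$; for the converse, in any partition into strong cliques each pendant $(v,i)'$ is forced into its pendant edge (its singleton is non-strong and its only other clique is the pendant edge), which consumes all layer and pendant vertices and leaves $C$ to be split into sub-cliques, so by (iv) the only admissible block is $C$ itself, forcing $C$ strong.

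The main obstacle I anticipate is not any one implication but the domination bookkeeping: the clean coloring correspondence rests on pinning down exactly which cross-layer and layer-to-$C$ adjacencies occur, and the witnesses in (ii) and in the (7) argument are precisely where the hypotheses $|V(G)|\ge 5$ and minimum degree $\ge 3$ are genuinely used --- three distinct neighbors of $v$ and a fifth vertex $w_0$. The subtlest checks are that these witness stable sets survive in $G''$ and that no pendant can be recruited to dominate a clique it could not dominate in $G'$; once these are confirmed, all equivalences follow from the pivot.
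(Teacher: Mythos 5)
Your proposal is correct and follows essentially the same route as the paper: the same pivot (1)$\Leftrightarrow$(2) via the coloring--dominating-stable-set correspondence, the same witnesses showing the $C_v$'s and layer edges are non-strong and the pendant edges are strong, and the same use of Lemma~\ref{lem:strong-cliques} throughout. The only cosmetic difference is your direct forced-partition argument for (7)$\Rightarrow$(4), where the paper takes the shorter chain (7)$\Rightarrow$(5)$\Rightarrow$(4); both are valid.
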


\begin{proof}
First we show that $G'$ is diamond-free, or, equivalently, that every edge of $G'$ is in a unique maximal clique (Lemma \ref{lem:maximal-cliques-diamond-free}). Consider an edge $e$ of $G'$ and let $(u,i)$ and $(v,j)$ be its endpoints. Then either $u = v$ or $i = j$. If $u = v$, then $C_u$ is the only maximal clique of $G'$ containing
$e$. If $i = j = 0$, then $C$ is the only maximal clique of $G'$ containing $e$. If $i = j \in \{1,2,3\}$, then $uv\in E(G)$ and since $G$ is triangle-free, $e$ itself is a maximal clique in $G'$.
We infer that $G'$ is diamond-free. Similarly, it can be seen that $G''$, too, is diamond-free.

We now prove the implication $1\Rightarrow 2$. Suppose that $C$ is not strong in $G'$. Then, by Lemma \ref{lem:strong-cliques}, there is a stable set $S'\subseteq V(G')\setminus C$ dominating $C$. Since $S'$ dominates $C$, for each $v\in V(G)$ there exists some $i\in \{1,2,3\}$ such that $(v,i)\in S'$. Furthermore, since the set of all vertices in $G'$ having the same first coordinate is a clique and $S'$ is a stable set in $G'$, we infer that $S'$ cannot contain two vertices with the same first coordinate. This means that for each $v\in V(G)$ there is a unique $i\in \{1,2,3\}$ such that $(v,i)\in S'$. For each $i\in \{1,2,3\}$, let $S_i=\{v\in V(G)\mid (v,i)\in S'\}$.
By the construction of $G'$, each $S_i$ is a stable set in $G$.
Moreover, due to the above uniqueness property, $\{S_1,S_2,S_3\}$ is a partition of $V(G)$ into three stable sets, implying that $G$ is 3-colorable.

Next we prove the implication $2\Rightarrow 1$. Suppose that $G$ is $3$-colorable and let $\{S_1,S_2,S_3\}$ be a partition of $V(G)$ into three stable sets. For $i\in \{1,2,3\}$, let $S_i' = S_i\times\{i\}$. Then $S = S_1'\cup S_2'\cup S_3'$ is a stable set in $G'-C$ dominating $C$. By Lemma \ref{lem:strong-cliques}, $C$ is not strong in $G'$.
See Figure~\ref{fig:reduction-2}.

\begin{figure}[h]
  \centering
  \includegraphics[scale=0.8]{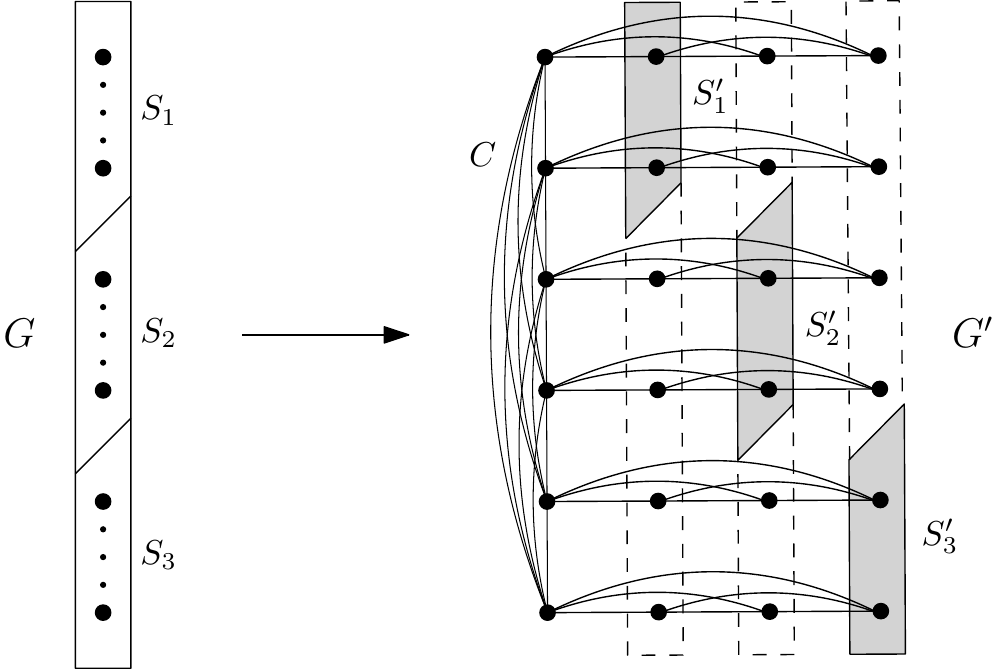}
  \caption{Mapping a $3$-coloring of $G$ to a stable set in $G'-C$ dominating $C$.}
  \label{fig:reduction-2}
\end{figure}

The implication $2\Rightarrow 3$ is immediate. Next we prove the implication $3\Rightarrow 2$. For this, we show that no maximal clique $C'$ in $G'$ other than $C$ is  strong. Let $C'\neq C$ be a maximal clique in $G'$. Since $G$ is triangle-free it follows that either $C'= \{(u,i),(v,i)\}$ for some $uv\in E(G)$ and $i\in\{1,2,3\}$ or $C'=C_v$ for some $v\in V(G)$. In the first case, suppose without loss of generality that $i=1$, then $C'=\{(u,1),(v,1)\}$ for some $uv\in E(G)$. Then, $\{(u,2),(v,3)\}$ is a stable set in $G'-C'$ dominating $C'$. Hence, $C'$ is not strong. Now, let $C'=C_v$ for some $v\in V(G)$. Let $u_1,u_2,u_3$ be three different neighbors of $v$ in $G$  and let $u_0\in V(G)\setminus \{v,u_1,u_2,u_3\}$.
(Recall that such vertices exist since $G\in \mathcal{G}$.) Then, $\{(u_i,i)\mid i\in \{0,1,2,3\}\}$ is a stable set in $G'-C'$ dominating $C'$. Hence, $C'$ is not strong.

Next we prove that statements 2 and 4 are equivalent. By Lemma \ref{lem:strong-cliques}, it suffices to show that there is a stable set in $G'-C$ dominating $C$ if and only if there is a stable set in $G''-C$  dominating $C$. But these two conditions are clearly equivalent, since
$G'[N_{G'}[C]] = G''[N_{G''}[C]] =G'$.

Before proving the remaining implications, let us note that for any vertex $v\in V(G)$ and $i\in \{1,2,3\}$, the clique $\{(v,i),(v,i)'\}$
is a simplicial clique in $G''$. In particular, $\{\{w,w'\}\mid w\in V(G')\setminus C\}$ is a collection of strong cliques in $G''$.
This immediately implies that statements 4 and 6 are equivalent.

Clearly, statement 6 implies statement 7 and statement 7 implies statement 5. Finally, we prove the implication $5\Rightarrow 4$. Suppose that every vertex of $G''$ is contained in a strong clique. Note that for every vertex $v\in V(G)$, the sets $C_v$ and $C$ are the only maximal cliques in $G''$ containing vertex $(v,0)$. Observe that $\{(v,i)'\mid i\in\{1,2,3\}\}\cup\{(u,0)\}$ where $u\in V(G)\setminus\{v\}$ is a stable set in $G''-C_v$ dominating $C_v$. By Lemma \ref{lem:strong-cliques}, $C_v$ is not strong in $G''$. Thus, since $(v,0)$ is contained in a strong clique, we infer that $C$ is a strong clique in $G''$.
\qed\end{proof}

Using Proposition~\ref{prop:reductions} we derive the following hardness results.

\thmHard*
\begin{sloppypar}
\begin{proof}
First we establish membership in {\sf co-NP} of the first four problems.

{\sc Strong Clique} is in {\sf co-NP} when restricted to any class of graphs. Given a \noinstance~to the problem, consisting of a graph $G$ and a clique $C$ in $G$, a short certificate of the fact that $C$ is not a strong clique in $G$ is given by any stable set $S$ in $G-C$ that dominates $C$ (such a stable set exists by Lemma~\ref{lem:strong-cliques}).

{\sc Strong Clique Existence} is in {\sf co-NP} when restricted to the class of diamond-free graphs. Given a \noinstance~to the problem, consisting of a diamond-free graph $G$ in which no clique is strong, a short certificate of this fact is a collection of  stable sets $S_C$, one for each maximal clique $C$ in $G$, such that each $S_C$ is a stable set in $G-C$ dominating $C$. Such a collection of stable sets exists by Lemma~\ref{lem:strong-cliques}, and it is of polynomial size by Corollary~\ref{cor:maximal-cliques-diamond-free}.

{\sc Strong Clique Vertex Cover} is in {\sf co-NP} when restricted to the class of diamond-free graphs. Consider a \noinstance~to the {\sc Strong Clique Vertex Cover} problem, consisting of a diamond-free graph $G$ in which not all vertices are contained in a strong clique. In this case, a short certificate consists of a vertex $v\in V(G)$ that is not contained in any strong clique and a collection of stable sets $S_C$, one for each maximal clique $C$ containing $v$, such that $S_C$ is a stable set in $G-C$ dominating $C$. Such a collection of stable sets exists by Lemma~\ref{lem:strong-cliques}, and it is of polynomial size by Corollary~\ref{cor:maximal-cliques-diamond-free}.

{\sc Strong Clique Partition} is in {\sf co-NP} in any class of graphs. A \noinstance~to the {\sc Strong Clique Partition} problem consists of a graph $G$ and a partition of its vertex set into cliques such that not all cliques in the partition are strong. A short certificate of the fact that this is indeed a \noinstance~consists of a clique $C$ from the given collection that is not strong and a stable set in $G-C$ dominating $C$. (Such a stable set exists by Lemma~\ref{lem:strong-cliques}.)

We prove hardness of all the five problems in the class of diamond-free graphs using a reduction from the {\sc $3$-Colorability} problem in the class $\mathcal{G}$, which is {\sf NP}-complete by Theorem~\ref{thm:3-col}. Note that by Proposition~\ref{prop:reductions}, the derived graphs $G'$ and $G''$ are diamond-free. Clearly, they can be computed in polynomial time from $G$.
Hardness of the {\sc Strong Clique} problem follows from the fact that $G$ is $3$-colorable if and only if $C$ is not a strong clique in $G'$. Hardness of the {\sc Strong Clique Existence} problem follows from the fact that $G$ is $3$-colorable if and only if $G'$ does not have any strong cliques. Hardness of the {\sc Strong Clique Vertex Cover} problem follows from the fact that $G$ is $3$-colorable if and only if not every vertex of $G''$ is contained in a strong clique.
Consider now the {\sc Strong Clique Partition} problem. By Proposition~\ref{prop:reductions}, $G$ is $3$-colorable if and only if not every clique from the following collection of cliques in $G''$ is strong: $\{C\}\cup \{\{w,w'\}\mid w\in V(G')\setminus C\}\,.$ Note that the cliques in this collection form a partition of the vertex set of $G''$ and that they can be computed in polynomial time from $G$. Hardness of the {\sc Strong Clique Partition Existence} problem follows from the fact that $G$ is $3$-colorable if and only if the vertex set of $G''$ cannot be partitioned into strong cliques.
\qed\end{proof}
\end{sloppypar}

\section{Conclusion}

\begin{sloppypar}
We established the complexity of seven problems related to strong cliques in the class of diamond-free graphs. Five of these problems remain intractable and the remaining two become solvable in linear time.
Our work refines the boundaries of known areas of tractability and intractability of algorithmic problems related to strong cliques in graphs. Besides the open problems of the complexity of testing whether every maximal clique is strong, or whether every edge is contained in a strong clique, many other interesting questions remain. For example, it is still open whether there exists a polynomial-time algorithm to recognize the class of strongly perfect graphs, introduced in 1984 by Berge and Duchet~\cite{MR778749} and defined as graphs in which every induced subgraph has a strong stable set. To the best of our knowledge, the recognition complexity of strongly perfect graphs is also open when restricted to the class of diamond-free graphs.
\end{sloppypar}


\subsection*{Acknowledgements}

\begin{sloppypar}
The authors are grateful to Ademir Hujdurovi\'c for helpful discussions and the anonymous reviewers for constructive remarks. The second named author has been supported by the European Union's Horizon 2020 research and innovation programme under the Marie Sklodowska-Curie grant agreement No.~734922, by Mexico's CONACYT scholarship 254379/438356, by Erasmus+ for practices (SMT) Action KA103 - Project 2017/2019, by MICINN from the Spanish Government under project PGC2018-095471-B-I00, and by AGAUR from the Catalan Government under project 2017SGR1087. The first and third named authors are supported in part by the Slovenian Research Agency (I0-0035, research programs P1-0285 and P1-0404, and research projects J1-9110, N1-0102). Part of this work was done while the third named author was visiting LAMSADE, University Paris-Dauphine; their support and hospitality is gratefully acknowledged.
\end{sloppypar}

\bibliographystyle{abbrv}
\bibliography{biblio}

\end{document}